\theoremstyle{plain}
\newtheorem{thm}{Theorem}[section]
\newtheorem{lemma}[thm]{Lemma}
\newtheorem{conj}[thm]{Conjecture}
\newtheorem{prop}[thm]{Proposition}
\theoremstyle{definition}
\newtheorem{rmk}[thm]{Remark}
\newtheorem{example}[thm]{Example}
\newcommand{\bbC}{\mathbb{C}}
\newcommand{\bbR}{\mathbb{R}}
\newcommand{\bbZ}{\mathbb{Z}}
\newcommand{\caM}{\mathcal{M}}
\newcommand{\caN}{\mathcal{N}}
\begin{document}

\title[Antichain generating polynomials of posets]
{Antichain generating polynomials of posets}

\author{Jian Ding}
\address[Ding]{College of Mathematics and Econometrics, Hunan University, Changsha 410082,
China}
\email{dingjain@hnu.edu.cn}

\author{Chao-Ping Dong}
\address[Dong]{Mathematics and Science College, Shanghai Normal University, Shanghai 200234,
P.~R.~China}
\email{chaopindong@163.com}
\thanks{Dong is supported by NSFC grant 11571097 and Shanghai Gaofeng Project for University Academic Development Program.}

\abstract{}
This paper gives a formula for the antichain generating polynomial $\caN_{[k]\times Q}$ of the poset $[k]\times Q$, where $[k]$ is an arbitrary chain and $Q$ is any finite graded poset.
When $Q$ specializes to be a connected minuscule poet, which was classified by Proctor in 1984, we find that the polynomial $\caN_{[k]\times Q}$ bears nice properties. For instance, we will recover the $B_n$-Narayana polynomial and the $D_{2n+2}$-Narayana polynomial. We collect evidence for the conjecture that whenever $\caN_{[k]\times P}(x)$ is palindromic, it must be $\gamma$-positive. Moreover, the family $\caN_{[2]\times [n]\times [m]}$ should be real-rooted and $\caN_{[2]\times [n]\times [n+1]}$ should be $\gamma$-positive.  We also conjecture that $\caN_{Q}(x)$ is log-concave (thus unimodal) for any connected Peck poset $Q$.
\endabstract

\subjclass[2010]{Primary 06A07}

\keywords{Antichain generating polynomial, gamma-positivity, log-concavity, minuscule posets, Peck posets}

\maketitle


\section{Introduction}

As on page 244 of Stanley \cite{St}, we call a finite poset $Q$ \emph{graded} if every maximal chain in $Q$ has the same length. In this case, there is a unique rank function $r: Q\to \bbZ_{> 0}$ such that all the minimal elements have rank $1$, and $r(x)=r(y)+1$ if $x$ covers $y$. Let $Q_i$ denote the set of all the elements in $Q$ having rank $i$. We call $Q_i$ a \emph{rank level} of $Q$. Let $d$ be the maximum of the rank function $r$. Then we partition $Q=\bigsqcup_{i=1}^d Q_i$, $1\leq i\leq d$, into $d$ rank levels. If
$|Q|_i=|Q|_{d+1-i}$ for $1\leq i\leq \frac{d}{2}$, we say that $Q$ is \emph{rank symmetric}. If $|Q|_1\leq |Q_2|\leq \cdots \leq |Q_k| \geq |Q_{k+1}|\geq \cdots \geq |Q_{d}|$ for some $1\leq k\leq d$, we say that $Q$ is \emph{rank unimodal}.

From now on, every poset $Q$ is assumed to be finite, graded, and connected. A subset $I$ of $Q$ is called an \emph{ideal} if $x\leq y$ in $Q$ and $y\in I$ implies that $x\in I$.  Put
$$
\caM_Q(x):=\sum_{I} x^{|I|},
$$ where $I$ runs over the ideals of $Q$.  A subset $A$ of $Q$ is called an \emph{antichain} if its elements are mutually incomparable.  Put
$$
\caN_Q(x):=\sum_{A} x^{|A|}
,$$
where $A$ runs over the antichains of $Q$.
Since ideals of $Q$ are in bijection with antichains of $Q$ via the map $I\to \max (I)$, where $\max (I)$ denotes the maximal elements of $I$, we have that
\begin{equation}\label{M-N-1}
\caM_Q(1)=\caN_Q(1).
\end{equation}
The ideal generating polynomial $\caM_Q(x)$ has been addressed intensively in the literature, while the antichain generating polynomial $\caN_Q(x)$ seems to attract much fewer attention. One possible reason for this  is that the computation of $\caN_Q(x)$ is much harder. The first result of the current paper is a formula for calculating the $\caN$-polynomial of $[k]\times Q$. Indeed,  let us use $I$, $I_j$, $J$, etc to denote ideals of $Q$. It is well-known that ideals of $[k]\times Q$ are in bijection with increasing $k$-sequences of ideals of $Q$: $I_1\subseteq I_2 \subseteq \cdots \subseteq I_k$. Then one sees that the corresponding antichain has size
\begin{equation}\label{antichain-size}
\sum_{j=1}^{k} \#\big(\max(I_j)\setminus I_{j-1}\big).
\end{equation}
Here we make the convention that $I_0$ is the empty ideal. Let us define
\begin{equation}\label{N-I-k}
\caN_{I}^k(x):=\sum_{I_1\subseteq  \cdots \subseteq I_{k-1}\subseteq I} x^{\sum_{j=1}^k\#\big(\max(I_j)\setminus I_{j-1}\big)},
\end{equation}
where $I_1\subseteq  \cdots \subseteq I_{k-1}$ runs over the increasing $(k-1)$-sequences of ideals of $Q$ which are contained in $I$.
Then
\begin{equation}\label{N-I-k-sum}
\caN_{[k]\times Q}(x)=\sum_{I}\caN_{I}^k(x),
\end{equation}
where $I$ runs over ideals of $Q$. Inspecting the formula \eqref{N-I-k} gives that
\begin{equation}\label{N-I-k-K+1}
\caN_{I}^{k+1}(x):=\sum_{J\subseteq I} x^{\#\big(\max(I)\setminus J\big)} \caN_J^k(x),
\end{equation}
where $J\subseteq I$ runs over ideals of $Q$. This leads us to the following.

\medskip
\noindent\textbf{Theorem A.}
\emph{Let $Q$ be a finite, connected, and graded poset. Let $I_1$, $I_2$, $\dots$, $I_{N(Q)}$ be a enumeration of all the ideals of $Q$.  Let $A_Q$ be the $N(Q)\times N(Q)$ matrix whose $(i, j)$-entry $a_{ij}$ equals $x^{\#\big(\max(I_i)\setminus I_j\big)}$ if $I_j\subseteq I_i$; and $a_{ij}$ equals zero otherwise. Let $V_Q^{(k)}$ be the column vector whose $i$-th entry is $\caN_{I_i}^k(x)$ for $1\leq i\leq N(Q)$. Then
\begin{equation}\label{N-I-k-K+1-matrix}
V_Q^{(k)}=A_Q^{k-1}V^{(1)}_Q.
\end{equation}}
\medskip

The formula \eqref{N-I-k-K+1-matrix} turns the calculation of $V_Q^{(k)}$---hence the calculation of $\caN_{[k]\times Q}(x)$ in view of \eqref{N-I-k-sum}---into matrix multiplication. See Example \ref{exam-2-3} for an illustration. Computationally, this is very efficient.

For the rest of the paper, let us specialize $Q$ to be a minuscule poset, and demonstrate that  similar to the polynomial $\caM_{[k]\times Q}$, the $\caN$-polynomial could also bear very nice properties. However, the techniques for unveiling them should lie much deeper.

Minuscule posets arise from minuscule representations of simple Lie algebras over $\bbC$.
According to Section 11 of Proctor \cite{Pr84}, minuscule posets are ubiquitous in mathematics. Our limited understanding of this philosophy comes from the recent work \cite{DW}, where  certain root posets arising from $\bbZ$-gradings of simple Lie algebras turn out to be mainly decoded by $[k]\times P$ for minuscule posets $P$.

As been classified by Proctor \cite{Pr84}, a connected minuscule poset $P$ is one of the following:
\begin{itemize}
\item[$\bullet$] $[m]\times [n]$;
\item[$\bullet$] $H_n:=([n]\times [n])/S_2$;
\item[$\bullet$] $K_n=[n]\oplus([1]\sqcup [1])\oplus [n]$ (the ordinal sum, see page 246 of Stanley \cite{St});
\item[$\bullet$]    $J^2([2]\times [3])$ (see Fig.~\ref{fig-J2});
\item[$\bullet$]    $J^3([2]\times [3])$.
\end{itemize}
Here $J(P)$ is the poset consisting of the ideals of $P$, partially ordered by inclusion; $J^2(P)$ stands for $J(J(P))$ and so on. The $\caM$-polynomials of minuscule posets enjoy many nice properties. For instance, by Theorem 6 of Proctor \cite{Pr84},
\begin{equation}\label{M-k-P}
\caM_{[k]\times P}(x)=\prod_{\alpha\in P}\frac{1-x^{r(\alpha)+k}}{1-x^{r(\alpha)}}.
\end{equation}
Due to the symmetry of $[k]\times P$, the polynomial $\caM_{[k]\times P}(x)$ is palindromic.

Let $f(x)=\sum_{i=0}^{n}a_i x^i\in\bbR_{>0}[x]$ be a polynomial of degree $n$. We say that $f(x)$ is \emph{palindromic} if $a_i=a_{n-i}$ for $0\leq i\leq n$; that $f(x)$ is \emph{monic} if $a_n=1$; and that $f(x)$ is \emph{$\gamma$-positive} if there exist some positive reals $\gamma_0, \gamma_1, \dots, \gamma_{\lfloor \frac{n}{2}\rfloor}$ such that
\begin{equation}\label{gamma-pos}
f(x)=\sum_{i=0}^{\lfloor \frac{n}{2}\rfloor} \gamma_i x^i (1+x)^{n-2i}.
\end{equation}
In this case, we call $\gamma_0$, $\dots$, $\gamma_{\lfloor \frac{n}{2}\rfloor}$ the $\gamma$-coefficients of $f$.
We say that $f(x)$ is \emph{real-rooted} if every root of $f(x)$ is real; that $f(x)$ is \emph{log-concave} if $a_i^2\geq a_{i-1}a_{i+1}$ for $1\leq i\leq n-1$. See Athanasiadis \cite{At} and Stanley \cite{St89}  for excellent surveys on $\gamma$-positivity, log-concavity and unimodality in algebra, combinatorics and geometry.

We shall collect evidence for the following two conjectures. Theorem A is very helpful in this process.

\medskip
\noindent\textbf{Conjecture B.}
\emph{Let $P$ be a connected minuscule poset. Let $k$ be any positive integer. If $\caN_{[k]\times P}(x)$ is palindromic, then it must be $\gamma$-positive.}
\medskip

 We also suspect that the family
$\caN_{[2]\times [n]\times [m]}$ should be real-rooted and $\caN_{[2]\times [n]\times [n+1]}$ should be $\gamma$-positive, see Conjectures \ref{conj-real-rooted-2-n-k} and \ref{conj-2-n-n+1}. What underlies the hypothetical real-rootedness should  be abundance of interlacing relations, see Conjecture \ref{conj-inter-mat}.

One may view $\gamma$-positivity as a delicate property which happens only under very special circumstances. On the other hand, log-concavity lives in a much broader domain. For the latter, we propose the following.

\medskip
\noindent\textbf{Conjecture C.}
\emph{Let $Q$ be a connected finite Peck poset. The polynomial $\caN_{Q}(x)$ is log-concave (hence unimodal).}
\medskip

Recall that $Q$ is said to be \emph{Sperner} if no antichain has more elements than the largest rank level of $Q$ does.
We say that $Q$ is \emph{strongly Sperner} if for every $k\geq 1$ no union of $k$ antichains contains  more elements than the union of the $k$ largest rank levels of $Q$ does. We call $Q$ \emph{Peck} if it is strongly Sperner, rank symmetric and rank unimodal.

The paper is organized as follows. We collect necessary preliminaries in Section 2.  Section 3 aims to collect evidence for Conjectures B and C. Sections 4 focuses on the family $\caN_{[2]\times [n]\times [n+1]}(x)$.

\section{Preliminaries}
This section aims to give some preliminaries.
\begin{lemma}\label{lemma-gamma-positive}
Let $f(x)$ be a polynomial in $\bbR_{> 0}[x]$.
If $f(x)$ is real-rooted and palindromic, then it is $\gamma$-positive.
\end{lemma}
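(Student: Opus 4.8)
The plan is to use the standard fact that a real-rooted polynomial with positive coefficients has only negative real roots, pair up the roots using the palindromy, and then expand in the $\gamma$-basis $\{x^i(1+x)^{n-2i}\}$.

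First I would observe that since $f(x)=\sum_{i=0}^n a_i x^i$ has all $a_i>0$, it has no nonnegative root, so by real-rootedness every root is a negative real number; write them as $-r_1,\dots,-r_n$ with each $r_j>0$. Next I would exploit palindromy: $x^n f(1/x)=f(x)$, which means the multiset of roots is invariant under $r\mapsto 1/r$. Hence the roots split into reciprocal pairs $\{-r,-1/r\}$ with $r\neq 1$, together with some number of roots equal to $-1$. If $n$ is odd there must be an odd number of roots equal to $-1$ (in particular at least one, forcing $f(-1)=0$, consistent with $\deg$ being odd and $f$ palindromic); if $n$ is even the number of $-1$ roots is even, possibly zero.

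Then the key computation: for a reciprocal pair, since $a_n>0$ (the leading coefficient), grouping the corresponding quadratic factor and normalizing gives, up to the positive leading constant $a_n$,
\begin{equation*}
(x+r)\Bigl(x+\tfrac1r\Bigr)=x^2+\Bigl(r+\tfrac1r\Bigr)x+1=(1+x)^2+\Bigl(r+\tfrac1r-2\Bigr)x,
\end{equation*}
and $r+\tfrac1r-2=\tfrac{(r-1)^2}{r}\geq 0$, with strict inequality since $r\neq 1$. Each factor $(x+1)$ coming from a $-1$ root contributes trivially. Thus $f(x)/a_n$ is a product of factors each of the form $(1+x)$ or $(1+x)^2+cx$ with $c>0$. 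Multiplying out, every monomial in the expansion is of the shape (constant $\geq 0$)$\cdot x^i(1+x)^{\,n-2i}$, and one checks the product is symmetric so only such terms with $0\le i\le\lfloor n/2\rfloor$ appear; collecting them yields $\gamma$-coefficients $\gamma_i\geq 0$, with $\gamma_0=a_n>0$. Finally I would note that the statement as phrased asks for positive $\gamma_i$; this is not literally true for all real-rooted palindromic $f$ (e.g.\ $(1+x)^4$ has $\gamma_1=\gamma_2=0$), so the honest conclusion is $\gamma$-\emph{nonnegativity}, and I would either state the lemma with ``nonnegative'' or remark that for the purposes of this paper nonnegativity suffices.

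The main obstacle is essentially bookkeeping rather than depth: one must be careful that the expansion of the product of the quadratic factors $(1+x)^2+c_jx$ and linear factors $(1+x)$ actually lands in the span of $\{x^i(1+x)^{n-2i}\}$ with nonnegative coefficients. This follows by induction on the number of factors, using the identity $x^i(1+x)^m\cdot\bigl((1+x)^2+cx\bigr)=x^i(1+x)^{m+2}+c\,x^{i+1}(1+x)^{m}$, which visibly preserves the class of nonnegative combinations of $\gamma$-basis elements; the linear factors are handled by the trivial step $x^i(1+x)^m\cdot(1+x)=x^i(1+x)^{m+1}$. The only genuinely needed external input is the elementary fact that a real-rooted polynomial with positive coefficients has all roots negative, which is immediate from evaluating at nonnegative reals.
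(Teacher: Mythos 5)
The paper does not actually prove this lemma; it simply cites Br\"and\'en (Lemma 4.1), Gal (Remark 3.1.1) and Sun--Wang--Zhang. Your argument is a correct, self-contained proof, and it is essentially the standard argument underlying those references: positivity of the coefficients forces all roots to be negative, palindromy makes the multiset of roots invariant under $r\mapsto 1/r$, each reciprocal pair $\{-r,-1/r\}$ with $r\neq 1$ contributes a factor $(1+x)^2+cx$ with $c=(r-1)^2/r>0$, the roots at $-1$ contribute factors $(1+x)$, and expanding the product (your identity $x^i(1+x)^m\bigl((1+x)^2+cx\bigr)=x^i(1+x)^{m+2}+c\,x^{i+1}(1+x)^m$, or equivalently observing that $\gamma_i$ equals $a_n$ times the $i$-th elementary symmetric function of the $c_j$'s) lands in the span of $\{x^i(1+x)^{n-2i}\}$ with nonnegative coefficients and $\gamma_0=a_n>0$. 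So compared with the paper you lose nothing and gain an explicit description of the $\gamma$-coefficients. Your caveat about strict versus weak positivity is also legitimate: with the paper's literal definition (``positive reals $\gamma_0,\dots,\gamma_{\lfloor n/2\rfloor}$''), the example $(1+x)^4$ shows the correct conclusion is $\gamma$-nonnegativity ($\gamma_i>0$ precisely for $i$ up to the number of reciprocal pairs with $r\neq 1$, and $\gamma_i=0$ beyond), which is the statement proved in the cited sources and the one the paper actually needs; this is a defect of the paper's phrasing, not of your proof.
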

\begin{proof}
See Lemma 4.1 of Br\"and\'en \cite{Br}, Remark 3.1.1 of Gal \cite{Ga}, and Sun-Wang-Zhang \cite{SWZ}.
\end{proof}

\begin{lemma}\label{lemma-unique-rank-level}
Let $Q=\bigsqcup_{i=1}^d Q_i$ be the decomposition of a connected finite graded poset into rank levels. Assume that $Q$ is rank unimodal. Then $[k]\times Q$ has a unique rank level of the largest size if and only if $k\leq d$ and that there exists  a unique $i_0\in [k, d]$ such that the statistic $|Q_i|+|Q_{i-1}|+\cdots+|Q_{i+1-k}|$ attains the maximum at $i_0$.
\end{lemma}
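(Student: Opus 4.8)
The plan is to make the rank-level sizes of $[k]\times Q$ completely explicit and then to reduce everything to an elementary statement about windows of $k$ consecutive terms of the sequence $|Q_1|, |Q_2|, \dots, |Q_d|$. Since the rank function of $[k]\times Q$ sends a pair $(a,x)$ to $a+r(x)-1$, the poset $[k]\times Q$ is graded with rank levels indexed by $m\in\{1,\dots,k+d-1\}$, and the size of its $m$-th rank level is
\begin{equation}\label{prop-size}
w_m:=\sum_{a=1}^{k}|Q_{m-a+1}|=\sum_{t=\max(1,\,m-k+1)}^{\min(m,\,d)}|Q_t|,
\end{equation}
with the convention $|Q_t|=0$ for $t\notin\{1,\dots,d\}$. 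The first thing I would record is the elementary fact that every rank level of a graded poset is nonempty, so that $|Q_t|\geq 1$ for every $t\in\{1,\dots,d\}$ (walk along a maximal chain, whose elements necessarily have ranks $1,2,\dots,d$).

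Assume first that $k\leq d$, and write $s(i):=|Q_i|+|Q_{i-1}|+\cdots+|Q_{i+1-k}|$ for $i\in[k,d]$, so that $s(i)=w_i$ on that range by \eqref{prop-size}. I would then analyse $w_m$ at the two ends of its range. For $m\leq k$ the inner sum in \eqref{prop-size} is $\sum_{t=1}^{m}|Q_t|$, so $w_{m+1}=w_m+|Q_{m+1}|>w_m$ for $m<k$; symmetrically, for $m\geq d$ the inner sum is $\sum_{t=m-k+1}^{d}|Q_t|$, so $w_{m+1}=w_m-|Q_{m-k+1}|<w_m$ for $d\leq m<k+d-1$. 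Hence $w$ is strictly increasing on $\{1,\dots,k\}$ and strictly decreasing on $\{d,\dots,k+d-1\}$, which forces $w_m<w_k$ for $m<k$ and $w_m<w_d$ for $m>d$. Therefore every maximizer of $m\mapsto w_m$ lies in the integer interval $[k,d]$, and, since $w$ agrees with $s$ there and nothing outside $[k,d]$ reaches the value $\max_{[k,d]}w$, the maximizers of $w$ on $\{1,\dots,k+d-1\}$ are exactly the maximizers of $s$ on $[k,d]$. This already yields the equivalence when $k\leq d$: the poset $[k]\times Q$ has a unique largest rank level precisely when $s$ has a unique maximizer $i_0$, and such an $i_0$ automatically lies in $[k,d]$.

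It remains to dispose of the case $k>d$, which I would use to complete the \emph{only if} direction. In that case the integer interval $[d,k]$ is nonempty and contains at least two points, and for every $m\in[d,k]$ the window $[m-k+1,m]$ contains all of $\{1,\dots,d\}$, so $w_m=|Q|$ there; since $w_m\leq|Q|$ for all $m$, the maximal size is attained at least twice, so $[k]\times Q$ does \emph{not} have a unique largest rank level, matching the fact that the right-hand condition already fails because $k>d$. I do not anticipate a genuine obstacle: the only points requiring care are the correct clipping of the summation range in \eqref{prop-size} at the two ends, and the remark that the strict monotonicity there rests on $|Q_t|\geq 1$ rather than on rank-unimodality. (Rank-unimodality of $Q$ is in fact not logically needed for the stated equivalence; it is natural to retain as a hypothesis because it makes $i\mapsto s(i)$ unimodal on $[k,d]$, so that "$s$ has a unique maximizer" becomes a purely local condition in the applications.)
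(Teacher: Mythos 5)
Your proof is correct and follows essentially the same route as the paper: compute the rank-level sizes of $[k]\times Q$ as sliding window sums of the $|Q_i|$, dispose of the case $k>d$ by noting that at least two levels have size $|Q|$, and for $k\leq d$ localize the maximizers to the interval $[k,d]$. The only (harmless) difference is that you obtain this localization from the strict monotonicity at the two ends, which rests on nonemptiness of the rank levels $|Q_t|\geq 1$, whereas the paper invokes rank-unimodality at that step; your observation that unimodality is not logically needed for the stated equivalence is accurate.
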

\begin{proof}
When $k\geq d+1$, one sees easily that $[k]\times Q$ has $k-d+1$ rank levels of size $|Q|$, which must be the largest. Thus it remains to consider $k\leq d$, then observe that the rank levels of $[k]\times Q$ have sizes $|Q_i|+|Q_{i-1}|+\cdots+|Q_{i+1-k}|$ for $i\in [1, d+k-1]$. Here we interpret $Q_j$ as the empty set if $j$ does not fall in $[1, d]$.
Since $Q$ is assumed to be rank unimodal, we see that a necessary condition for the size to be largest is that $i\leq d$ and that $i+1-k\geq 1$, i.e., $i\in [k, d]$. Now the desired conclusion is obvious.
\end{proof}

\begin{lemma}\label{lemma-monic}
Let $P$ be a connected minuscule poset.
The polynomial $\caN_{[k]\times P}$ is monic precisely in the following cases:
\begin{itemize}
\item[(a)] $P=[m]\times [n]$, $m\leq n$, and $k=n-m+1, n-m+3, n-m+5,  \dots, n+m-1$;
\item[(b)] $P=H_n$, $n$ is odd, and $k=1, 5, 9, \dots, 2n-1$;
\item[(c)] $P=H_n$, $n$ is even, and $k=3, 7, 11, \dots, 2n-1$;
\item[(d)] $P=K_n$, and $k=1, 2n+1$;
\item[(e)] $P=J^2([2]\times [3])$, and $k=5, 11$;
\item[(f)] $P=J^3([2]\times [3])$, and $k=1, 9, 17$.
\end{itemize}
\end{lemma}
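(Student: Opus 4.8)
The key observation is that $\caN_{[k]\times P}$ is monic if and only if its top-degree coefficient is $1$, and by \eqref{antichain-size} the degree of $\caN_{[k]\times P}$ equals the size of the largest antichain in $[k]\times P$, while the leading coefficient counts the number of antichains of that maximal size. Since every connected minuscule poset $P$ is Peck (this is classical, following from Proctor \cite{Pr84}; it is rank symmetric, rank unimodal, and strongly Sperner), the product $[k]\times P$ is also strongly Sperner, and the largest antichain has size equal to the largest rank level of $[k]\times P$. Moreover, in a strongly Sperner rank-unimodal poset the maximum-size antichains are exactly the maximum-size rank levels. Hence $\caN_{[k]\times P}$ is monic precisely when $[k]\times P$ has a \emph{unique} rank level of largest size, and Lemma \ref{lemma-unique-rank-level} translates this into a purely numerical condition on the rank-level sizes of $P$.

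So the plan is: first, record the rank-generating sequence $(|P_1|,\dots,|P_d|)$ for each of the five families on Proctor's list. For $P=[m]\times[n]$ with $m\le n$ these are the well-known "trapezoidal" sequences $1,2,\dots,m,m,\dots,m,m-1,\dots,1$ with $d=m+n-1$; for $H_n$ one has the sequence of sizes of rank levels of $([n]\times[n])/S_2$; for $K_n$ the sequence is $1,1,\dots,1,2,1,\dots,1,1$ with the $2$ in the middle; and for $J^2([2]\times[3])$ and $J^3([2]\times[3])$ the (finitely many) rank-level sizes can be written down explicitly. Second, for each family, apply Lemma \ref{lemma-unique-rank-level}: compute the window sums $s_i^{(k)}:=|P_i|+|P_{i-1}|+\cdots+|P_{i+1-k}|$ for $i\in[k,d]$ (with out-of-range levels empty) and determine for which $k$ this unimodal sequence attains its maximum at a unique index. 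Carrying out this arithmetic family by family yields exactly the arithmetic progressions listed in (a)--(f); for instance, for $[m]\times[n]$ the window sum is maximized on the "flat" part, and uniqueness of the maximizer forces the window length $k$ to have the same parity as the length $n-m+1$ of that flat part, giving $k=n-m+1,n-m+3,\dots,n+m-1$, while the requirement $k\le d=m+n-1$ gives the upper endpoint.

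The main obstacle is twofold. The first, and genuinely substantive, point is justifying that for these posets the maximum-size antichains coincide with the maximum-size rank levels, i.e. that being strongly Sperner (together with rank unimodality) forces every top-size antichain to be a rank level; this needs the standard fact that in a normalized-matching / Peck poset the only maximum antichains are the full top rank levels, applied to $[k]\times P$ — so one must also check that $[k]\times P$ inherits the strong Sperner property from $P$ (this follows since $[k]$ is a chain and the product of a chain with a strongly Sperner poset is strongly Sperner, or alternatively from the fact that $[k]\times P$ is again Peck). The second obstacle is purely bookkeeping: the $H_n$ case splits on the parity of $n$ because the middle rank levels of $([n]\times[n])/S_2$ behave differently in the two parities, which is why (b) and (c) have different residues mod $4$; and the two exceptional posets $J^2([2]\times[3])$, $J^3([2]\times[3])$ must be handled by direct computation of their rank sequences. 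None of this is deep once the structural reduction above is in place, but it requires care to get the endpoints of each progression right.
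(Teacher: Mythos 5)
Your proposal follows essentially the same route as the paper: use that each connected minuscule poset is Peck and that products of Peck posets are Peck (so $[k]\times P$ is Sperner), reduce monicity of $\caN_{[k]\times P}$ to $[k]\times P$ having a unique rank level of largest size, and then settle each family by the window-sum criterion of Lemma \ref{lemma-unique-rank-level}. The subtle point you flag---that a unique largest rank level forces a unique maximum-size antichain, which does not follow from the Sperner property alone and is not literally true for arbitrary Peck posets---is in fact passed over silently in the paper's own proof, so your treatment is, if anything, more careful on exactly the step where the published argument is tersest.
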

\begin{proof}
Since each connected minuscule poset is Peck and the product of Peck posets is Peck (see Theorem 2 of Proctor \cite{Pr82}), we have that $[k]\times P$ is Peck. Therefore, $[k]\times P$ is Sperner. Thus the polynomial $\caN_{[k]\times P}$ is monic if and only if $[k]\times P$ has a unique rank level of the largest size. Checking the latter condition via Lemma \ref{lemma-unique-rank-level} leads us to the desired conclusion.
\end{proof}

\begin{prop}\label{prop-gamma-n-n} The polynomials $\caN_{[n]\times [n]}(x)=\sum_{i\geq 0}{n \choose i}{n\choose
i}x^i$ are $\gamma$-positive.
\end{prop}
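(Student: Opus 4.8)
The plan is to determine the coefficients of $\caN_{[n]\times[n]}(x)$ by a direct count of antichains, and then to produce an \emph{explicit} $\gamma$-expansion via a two-variable generating function, so that positivity of the $\gamma$-coefficients becomes self-evident.

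First I would record the combinatorial formula. In $[n]\times[n]$ one has $(a,b)\le(c,d)$ iff $a\le c$ and $b\le d$; hence if $\{(a_1,b_1),\dots,(a_i,b_i)\}$ is an antichain listed with $a_1<\cdots<a_i$, then incomparability forces $b_1>\cdots>b_i$, and conversely a choice of an $i$-element subset $\{a_1<\cdots<a_i\}$ of $[n]$ together with an $i$-element subset of $[n]$ for the second coordinates determines a unique such antichain. This gives $\caN_{[n]\times[n]}(x)=\sum_{i\ge 0}\binom{n}{i}^2x^i$; in particular this polynomial is monic of degree $n$ and is palindromic, since $\binom{n}{i}^2=\binom{n}{n-i}^2$, so the notion of $\gamma$-positivity in \eqref{gamma-pos} applies to it.

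Next I would use the identity $\sum_{i\ge0}\binom{n}{i}^2x^i=[y^n]\bigl((1+y)(1+xy)\bigr)^n$, which follows by expanding $(1+y)^n(1+xy)^n$, extracting the coefficient of $y^n$, and applying $\binom{n}{n-i}=\binom{n}{i}$. The key move is then to rewrite $(1+y)(1+xy)=1+(1+x)y+xy^2$ and expand the $n$-th power by the multinomial theorem: a term selecting the middle summand $(1+x)y$ from $a$ of the $n$ factors and the last summand $xy^2$ from $b$ of them contributes to the coefficient of $y^n$ exactly when $a+2b=n$. Summing over $b=0,\dots,\lfloor n/2\rfloor$ with $a=n-2b$ yields
\[
\caN_{[n]\times[n]}(x)=\sum_{b=0}^{\lfloor n/2\rfloor}\frac{n!}{(n-2b)!\,(b!)^2}\,x^b(1+x)^{n-2b}=\sum_{b=0}^{\lfloor n/2\rfloor}\binom{n}{2b}\binom{2b}{b}\,x^b(1+x)^{n-2b},
\]
which is exactly a $\gamma$-expansion with $\gamma_b=\binom{n}{2b}\binom{2b}{b}>0$, completing the argument.

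There is no genuinely hard step: the proposition is soft, and the only thing to get right is the choice of the bivariate generating function in the middle step; everything else is a bare multinomial expansion. I would also note, as a remark, an alternative and more conceptual route: one has $\caN_{[n]\times[n]}(x)=(1-x)^nP_n\!\bigl(\tfrac{1+x}{1-x}\bigr)$ with $P_n$ the Legendre polynomial, so the $n$ simple real roots of $P_n$ in $(-1,1)$ pull back under $t\mapsto\frac{t-1}{t+1}$ to $n$ negative real roots; thus $\caN_{[n]\times[n]}$ is real-rooted, and $\gamma$-positivity then follows from Lemma \ref{lemma-gamma-positive}. This second route proves more (real-rootedness) and foreshadows the later real-rootedness conjectures, but the first route is shorter and entirely elementary, so I would present that one and keep the Legendre description as a remark.
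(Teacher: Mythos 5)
Your proof is correct, but it takes a genuinely different route from the paper. The paper does not compute anything directly: it identifies $\caN_{[n]\times[n]}(x)$ with the $B_n$-Narayana polynomial ${\rm Cat}(B_n,x)$ via (62) of Athanasiadis \cite{At} and then quotes the known $\gamma$-expansion ${\rm Cat}(B_n,x)=\sum_i\binom{n}{i,\,i,\,n-2i}x^i(1+x)^{n-2i}$ from Postnikov--Reiner--Williams \cite{PRW}. You instead give a self-contained elementary derivation: the antichain count $\binom{n}{i}^2$, the extraction $\caN_{[n]\times[n]}(x)=[y^n]\bigl((1+y)(1+xy)\bigr)^n$, and the multinomial expansion of $1+(1+x)y+xy^2$, all of which check out and yield $\gamma_b=\binom{n}{2b}\binom{2b}{b}=\binom{n}{b,\,b,\,n-2b}$, i.e.\ exactly the same $\gamma$-coefficients the paper imports from the literature. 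What your route buys is independence from the cited results and an explicit combinatorial identity proved on the spot; what the paper's route buys is the identification with ${\rm Cat}(B_n,x)$ itself, which the authors highlight afterwards as a phenomenon deserving a direct explanation, and which your argument bypasses rather than explains. Your Legendre-polynomial remark, giving $\caN_{[n]\times[n]}(x)=(1-x)^nP_n\bigl(\tfrac{1+x}{1-x}\bigr)$ and hence real-rootedness followed by Lemma \ref{lemma-gamma-positive}, is also valid and proves strictly more than the paper's statement, nicely echoing the real-rootedness conjectures later in the paper.
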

\begin{proof}
By (62) of \cite{At}, the polynomial $\caN_{[n]\times [n]}(x)$ coincides with ${\rm Cat}(B_n, x)$---the $B_n$-Narayana polynomial. Now by Proposition 11.15 of \cite{PRW} Postnikov, Reiner and Williams (see also Theorem 2.32 of Athanasiadis \cite{At}), these polynomials are $\gamma$-positive. Indeed, we have that
$$
{\rm Cat}(B_n, x)=\sum_{i=0}^{\lfloor \frac{n}{2}\rfloor} {n\choose i, i, n-2i} x^{i}(1+x)^{n-2i}.
$$
\end{proof}

\begin{prop}\label{prop-gamma-H-n} The polynomials $\caN_{H_n}(x)=\sum_{i\geq 0}{n+1 \choose 2i} x^i$ are $\gamma$-positive for $n$ odd.
\end{prop}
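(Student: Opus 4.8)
The plan is to write down the $\gamma$-expansion of $\caN_{H_n}$ explicitly; this is short once the stated binomial formula $\caN_{H_n}(x)=\sum_{i\ge 0}\binom{n+1}{2i}x^i$ is in hand. First I would record the classical even-subscript identity $(1+y)^{n+1}+(1-y)^{n+1}=2\sum_{i\ge 0}\binom{n+1}{2i}y^{2i}$; substituting $y=\sqrt{x}$ (the odd powers of $\sqrt{x}$ cancel, so both sides are genuine polynomials in $x$) gives
\[
\caN_{H_n}(x)=\frac{(1+\sqrt{x})^{n+1}+(1-\sqrt{x})^{n+1}}{2}.
\]
For $n$ odd we may now write $n+1=2N$ with $N=(n+1)/2\in\bbZ$; this is the one place the parity hypothesis will be used.

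Next, since $(1\pm\sqrt{x})^{n+1}=\bigl((1\pm\sqrt{x})^{2}\bigr)^{N}=(1+x\pm 2\sqrt{x})^{N}$, I would expand each factor by the binomial theorem,
\[
(1+x\pm 2\sqrt{x})^{N}=\sum_{j=0}^{N}\binom{N}{j}(1+x)^{N-j}(\pm 2\sqrt{x})^{j},
\]
and add the two expansions: the terms with $j$ odd cancel and those with $j=2l$ double, where $(\pm 2\sqrt{x})^{2l}=4^{l}x^{l}$. This yields
\[
\caN_{H_n}(x)=\sum_{l=0}^{\lfloor N/2\rfloor}\binom{N}{2l}\,4^{l}\,x^{l}(1+x)^{N-2l}.
\]
Since the $l=0$ summand $(1+x)^{N}$ has degree $N$ while every other summand $x^{l}(1+x)^{N-2l}$ has degree $N-l<N$, this is precisely a $\gamma$-expansion of the degree-$N$ polynomial $\caN_{H_n}$, with $\gamma_{l}=4^{l}\binom{(n+1)/2}{2l}$; in particular it also re-derives that $\caN_{H_n}$ is palindromic for $n$ odd. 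It then only remains to check that these $\gamma_{l}$ are strictly positive for $0\le l\le\lfloor N/2\rfloor$, which is immediate because then $2l\le 2\lfloor N/2\rfloor\le N$, so $\binom{N}{2l}\ge 1$.

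I do not expect a real obstacle along this route: it amounts to one binomial manipulation plus bookkeeping, and the parity of $n$ is needed only to make $N=(n+1)/2$ an integer (for $n$ even, $\caN_{H_n}$ has leading coefficient $\binom{n+1}{n}=n+1\neq 1$, hence is not palindromic and the statement is vacuous). As a cross-check, one could instead prove real-rootedness and invoke Lemma \ref{lemma-gamma-positive}: the substitution $x=-t^{2}$ transforms $\caN_{H_n}(x)$ into $(1+t^{2})^{(n+1)/2}\cos\bigl((n+1)\arctan t\bigr)$, whose vanishing forces $\arctan t=\tfrac{(2k+1)\pi}{2(n+1)}$, so that $\caN_{H_n}$ has the $\tfrac{n+1}{2}=\deg\caN_{H_n}$ distinct negative roots $x_{k}=-\tan^{2}\tfrac{(2k+1)\pi}{2(n+1)}$, $0\le k\le\tfrac{n-1}{2}$; together with palindromicity this also gives $\gamma$-positivity, though without the explicit $\gamma_{l}$.
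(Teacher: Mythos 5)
Your argument is correct, and it reaches the conclusion by a genuinely different and more explicit route than the paper. The paper applies the even-part operator ${\rm E}_2$ to $(1+x)^{n+1}$, quotes Lemma 7.4 of Athanasiadis--Savvidou to conclude that $\caN_{H_n}(x)$ is real-rooted, and then invokes Lemma \ref{lemma-gamma-positive} (real-rooted and palindromic implies $\gamma$-positive); in particular it produces no $\gamma$-coefficients. You instead write $\caN_{H_n}(x)=\tfrac{1}{2}\bigl((1+\sqrt{x})^{n+1}+(1-\sqrt{x})^{n+1}\bigr)$, use the parity of $n$ to set $n+1=2N$ and square out $(1\pm\sqrt{x})^{2}=1+x\pm 2\sqrt{x}$, and obtain the closed-form expansion $\caN_{H_n}(x)=\sum_{l=0}^{\lfloor N/2\rfloor}4^{l}{N\choose 2l}x^{l}(1+x)^{N-2l}$, which exhibits positivity of every $\gamma_{l}$ directly (and checks out in small cases: $1+6x+x^{2}=(1+x)^{2}+4x$ for $n=3$, $1+15x+15x^{2}+x^{3}=(1+x)^{3}+12x(1+x)$ for $n=5$). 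What your approach buys is the explicit formula $\gamma_{l}=4^{l}{(n+1)/2\choose 2l}$, which is directly relevant to the remark following the proposition asking for interpretations of these $\gamma$-coefficients; what the paper's approach buys is brevity, at the cost of relying on an external real-rootedness lemma and yielding no formula. Your cross-check via the explicit negative roots $-\tan^{2}\tfrac{(2k+1)\pi}{2(n+1)}$, $0\le k\le \tfrac{n-1}{2}$, is in effect a self-contained replacement for the paper's real-rootedness step and is also correct, since the count of distinct real roots matches the degree $\tfrac{n+1}{2}$.
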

\begin{proof}
Let ${\rm E}_2$ be the linear operator on the space $\bbR[x]$ of polynomials with real coefficients defined by setting ${\rm E}_2(x^m)=x^{m/2}$ if $m$ is even, and ${\rm E}_2(x^m)=0$ otherwise. Then one sees easily that
$$
{\rm E}_2(1+x)^{n+1}=\sum_{i\geq 0}{n+1 \choose 2i} x^i.
$$
Since $(1+x)^{n+1}$ is real-rooted, we conclude from Lemma 7.4 of Athanasiadis and Savvidou \cite{AS} that ${\rm E}_2(1+x)^{n+1}$ is real-rooted.  Now Lemma \ref{lemma-gamma-positive} finishes the proof.
\end{proof}
\begin{rmk}
As suggested by Athanasiadis, it would be interesting to find combinatorial interpretations of the $\gamma$-coefficients of $\caN_{H_n}(x)$ when $n$ is odd.
\end{rmk}

Given two real-rooted polynomials $f(x)=\prod_{i=1}^{{\rm deg} f}(x-x_i)$ and $g(x)=\prod_{j=1}^{{\rm deg} g}(x-\xi_j)$, we say that $f(x)$ \emph{interlaces} $g(x)$---denoted by $f(x)\preceq g(x)$---if their roots alternate in the following way:
$$
\cdots\leq x_2\leq \xi_2\leq x_1\leq \xi_1.
$$
Note that a necessary condition for $f(x)\preceq g(x)$ is that ${\rm deg} f\leq {\rm deg} g\leq {\rm deg} f+1$.

\begin{thm}\emph{(\textbf{Obreschkoff} \cite[Satz 5.2]{Ob})}\label{thm-Ob}
 Let $f(x), g(x)\in\bbR[x]$ be two polynomials such that ${\rm deg} f\leq {\rm deg} g\leq {\rm deg} f+1$. Then $f(x)$ interlaces $g(x)$ if and only if $c_1 f(x)+c_2 g(x)$ is real-rooted for any $c_1, c_2\in\bbR$.
\end{thm}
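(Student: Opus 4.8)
The plan is to establish the two implications separately, in each case reducing the general situation to a non-degenerate one.

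For the forward implication, suppose $f\preceq g$ and write $a,b$ for the leading coefficients of $f,g$. The cases $c_1=0$ or $c_2=0$ are immediate, so assume $c_1c_2\neq 0$ and set $p=c_1f+c_2g$. I would first handle the model case where $n:=\deg g$ and $g$ has simple real roots $\xi_1>\cdots>\xi_n$ with the roots of $f$ strictly interlacing them. Since $p(\xi_i)=c_1f(\xi_i)$ and $f$ changes sign across each $\xi_i$ (it has exactly one, simple, root in between), $p$ has a sign change, hence a root, in each of the $n-1$ open intervals $(\xi_{i+1},\xi_i)$. Comparing the sign of $p$ at $\pm\infty$ with its values at the extreme roots $\xi_1,\xi_n$ then supplies exactly the one remaining root---in $(\xi_1,\infty)$ or in $(-\infty,\xi_n)$---when $\deg p=n$; and when $\deg p=n-1$ (the leading terms cancel) the $n-1$ roots already found exhaust $p$, which is not identically zero since $f$ and $g$ are linearly independent. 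Either way $p$ has $\deg p$ real roots, so it is real-rooted. The general case (weak interlacing, possibly multiple roots) then follows by a perturbation argument: move the roots of $f$ and $g$ by $O(\varepsilon)$, keeping the leading coefficients fixed, so that they become simple and strictly interlacing; apply the model case to the perturbed polynomials; and let $\varepsilon\to 0$, using that real-rootedness is preserved under coefficientwise limits (the roots vary continuously, and those that do not escape to infinity remain real).

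For the converse, assume every $c_1f+c_2g$ is real-rooted. Setting $c_2=0$ and then $c_1=0$ shows that $f$ and $g$ are themselves real-rooted, and dividing out $\gcd(f,g)$---a divisor of $f$, hence real-rooted---reduces us (the inequality $\deg f\le\deg g\le\deg f+1$ being preserved) to the coprime case, in which I would show that $f$ and $g$ strictly interlace. Put $n=\deg g$. A local (Rouch\'e) analysis of $g+tf$ for small $t$ near a root of $g$ of multiplicity $e$, using that $f$ is nonzero there by coprimality, shows that if $e\ge 2$ then $g+tf$ acquires a non-real root for one sign of $t$; hence $g$, and symmetrically $f$, has only simple roots. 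Next, if $f$ had no root in some gap $(\xi_{i+1},\xi_i)$ between consecutive roots of $g$, then on that gap the rational function $-g/f$ has constant sign and tends to $0$ at both endpoints, hence attains a nonzero extreme value $\mu$; for $t$ of the sign of $\mu$ with $0<|t|<|\mu|$ the equation $-g/f=t$ has at least two solutions in the gap, while for $|t|>|\mu|$ it has none, and since the endpoints $\xi_{i+1},\xi_i$ are never roots of $g+tf$ for $t\neq 0$ the trapped roots cannot escape, so they must collide and leave the real line---contradicting real-rootedness. Thus $f$ has at least one root in each of the $n-1$ gaps of $g$, and the same argument with $f$ and $g$ interchanged (legitimate since $\deg g\le\deg f+1$) gives that $g$ has at least one root in each gap of $f$. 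A short combinatorial check on the merged list of roots upgrades this to strict interlacing: a gap of $g$ carrying two roots of $f$ would enclose a gap of $f$ containing no root of $g$, which is impossible; so each gap of $g$ carries exactly one root of $f$, and the one remaining root of $f$ (present only when $\deg f=\deg g$) lies outside $[\xi_n,\xi_1]$.

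The main obstacle is the converse, and within it this last step: excluding the possibility that the extra root of $f$ sits inside a gap rather than outside $[\xi_n,\xi_1]$, which is exactly where one must bring in the symmetric statement about roots of $g$ in the gaps of $f$ together with the observation that a gap nested inside another gap contains no root. Care is also needed to make the local root-tracking rigorous (via Rouch\'e on small circles, rather than formal asymptotics), and to note that when $\deg f=\deg g$ only one of the two possible orders of interlacing is forced, consistently with the statement. As an alternative to this hands-on converse, one can instead relate the theorem to the Hermite--Biehler theorem by writing $c_1f+c_2g=\im\big(\overline{w}\,(g+if)\big)$ with $w=c_1-ic_2$, so that the hypothesis becomes the statement that $g+if$ has all its zeros in a closed half-plane; but this merely relocates the difficulty.
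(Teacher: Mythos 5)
The paper itself offers no argument here: the theorem is quoted from Obreschkoff's book \cite[Satz 5.2]{Ob}, so the only comparison available is with the classical literature. Your proposal is, in substance, the standard proof of the Hermite--Kakeya--Obreschkoff theorem, and its outline is sound: the forward direction via sign alternation of $c_1f$ at the roots of $g$ in the simple, strictly interlacing case, followed by a perturbation and a Hurwitz-type limit; the converse by passing to the coprime case, excluding multiple roots locally, and trapping two roots of $g+tf$ inside a gap of $g$ free of roots of $f$ until they are forced off the real axis. Three details would need attention in a written version. First, dividing out $\gcd(f,g)$ tacitly uses the (true, but unstated) lemma that $f_1\preceq g_1$ implies $df_1\preceq dg_1$ for a real-rooted common factor $d$. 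Second, for a root $x_0$ of $g$ of odd multiplicity $e\ge 3$, a bare Rouch\'e count of $e$ roots in a small disk does not by itself yield a non-real root; you need an extra observation, e.g.\ that $-g/f$ is strictly monotone on a small real interval about $x_0$ (the numerator of its derivative behaves like $-ecf(x_0)(x-x_0)^{e-1}$, of constant sign), so at most one of the $e$ nearby roots of $g+tf$ is real. Third, and most substantively: in the equal-degree case the hypothesis is symmetric in $f$ and $g$, so it can only give ``$f\preceq g$ or $g\preceq f$''; the one-sided conclusion as literally printed in the theorem is false (take $f=x-1$, $g=x$). That is an imprecision of the quoted statement rather than of your argument---the classical theorem is phrased as ``$f$ and $g$ have alternating roots''---but your closing remark that only one order is forced ``consistently with the statement'' glosses over this; what your proof actually establishes is the correct symmetric version, which is also all that the paper needs (it only uses common interleavers downstream).
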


 Given  three real-rooted polynomials $f(x)$, $g(x)$ and $h(x)$, after Chudnovsky and Seymour \cite{CS}, we  say that $h(x)$ is a \emph{common interleaver} for $f(x)$ and $g(x)$ if $f(x)\preceq h(x)$ and $g(x)\preceq h(x)$. Note that if $f(x)\preceq g(x)$, then $f(x)$ and $g(x)$ have a common interleaver $g(x)$.

\section{Evidence for Conjectures B and C}
This section aims to collect evidence for Conjectures B and C.

Firstly, let us verify Conjecture B for $k=1$.
\begin{prop}\label{prop-minuscule-gamma} Let $P$ be a connected minuscule poset. The following are equivalent:
\begin{itemize}
\item[(a)] $\caN_{P}(x)$ is monic;
\item[(b)] $\caN_{P}(x)$ is palindromic;
\item[(c)] $\caN_{P}(x)$ is $\gamma$-positive.
\end{itemize}
\end{prop}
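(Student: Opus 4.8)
The plan is to prove the cycle (c) $\Rightarrow$ (b) $\Rightarrow$ (a) $\Rightarrow$ (c). The first two implications are immediate and hold for any finite graded $P$. If $\caN_P(x)$ is $\gamma$-positive of degree $n$, then it is a positive combination of the polynomials $x^i(1+x)^{n-2i}$, each of which is palindromic with respect to $n$, so $\caN_P(x)$ is palindromic. And since the empty set is the only antichain of size $0$, the constant term of $\caN_P(x)$ is $1$; hence, if $\caN_P(x)$ is palindromic, its leading coefficient is $1$ as well, i.e., $\caN_P(x)$ is monic. These two steps already give that (c) $\Rightarrow$ (a) for every connected minuscule $P$, so the whole equivalence will follow once (a) $\Rightarrow$ (c) is established.

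The substance is (a) $\Rightarrow$ (c). First I would invoke Lemma \ref{lemma-monic} with $k=1$ --- noting $[1]\times P\cong P$ --- to pin down exactly which connected minuscule posets $P$ have $\caN_P(x)$ monic: case (a) of that lemma forces $m=n$, so $P=[n]\times[n]$; case (b) gives $P=H_n$ with $n$ odd; case (d) gives $P=K_n$; case (f) gives $P=J^3([2]\times[3])$; and cases (c) and (e) cannot occur at $k=1$. So it suffices to verify $\gamma$-positivity of $\caN_P(x)$ in precisely these four situations.

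Two of them are already in hand: $P=[n]\times[n]$ by Proposition \ref{prop-gamma-n-n}, and $P=H_n$ with $n$ odd by Proposition \ref{prop-gamma-H-n}. For $P=K_n=[n]\oplus([1]\sqcup[1])\oplus[n]$, I would observe that the only pair of incomparable elements is the pair forming the middle rank, so the antichains of $K_n$ are exactly the empty set, the $2n+2$ singletons, and that middle pair; therefore
$$\caN_{K_n}(x)=1+(2n+2)x+x^2=(1+x)^2+2nx,$$
which is $\gamma$-positive with $\gamma_0=1$ and $\gamma_1=2n>0$. For the exceptional poset $P=J^3([2]\times[3])$, the plan is to compute $\caN_P(x)$ explicitly --- by enumerating the antichains of $P$ directly (equivalently, summing $x^{|\max I|}$ over the ideals $I$ of $P$) --- and then to display its $\gamma$-expansion and check that every $\gamma$-coefficient is positive; this is a single bounded calculation.

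The main obstacle is precisely this last case. Unlike $[n]\times[n]$ and $H_n$, the poset $J^3([2]\times[3])$ carries no product structure and no clean binomial formula, so there is no shortcut: one must carry out the enumeration and inspect the resulting $\gamma$-coefficients. The proposition --- and with it the $k=1$ instance of Conjecture B --- stands or falls on that finite check, so it would be prudent to confirm $\caN_{J^3([2]\times[3])}(x)$ by an independent computation.
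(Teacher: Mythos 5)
Your proposal is correct and takes essentially the same route as the paper: reduce everything to (a) $\Rightarrow$ (c), apply Lemma \ref{lemma-monic} at $k=1$ to isolate the four cases $[n]\times[n]$, $H_n$ with $n$ odd, $K_n$, and $J^3([2]\times[3])$, then invoke Propositions \ref{prop-gamma-n-n} and \ref{prop-gamma-H-n} for the first two and the explicit quadratic $(1+x)^2+2nx$ for $K_n$. The only step you leave unexecuted is the finite check for $J^3([2]\times[3])$; the paper records it as $\caN_{J^3([2]\times[3])}(x)=1+27x+27x^2+x^3=(1+x)^3+24x(1+x)$, which is visibly $\gamma$-positive, so your plan goes through.
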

\begin{proof}
It suffices to show that (a) implies (c). By Lemma \ref{lemma-monic}, $\caN_P(x)$ is monic precisely when $P$ is $[n]\times [n]$, or $H_n$ for $n$ odd, or $K_n$, or $J^3([2]\times [3])$. Moreover, we have that
\begin{itemize}
\item[$\bullet$] $\caN_{[n]\times [n]}(x)=\sum_{i\geq 0}{n \choose i}{n\choose
i}x^i$.

\item[$\bullet$] $\caN_{H_n}(x)=\sum_{i\geq 0}{n+1 \choose 2i} x^i$ for $n$ odd.

\item[$\bullet$] $\caN_{K_n}(x)=1+ (2n+2)x +x^2=(1+x)^2+2nx$.

\item[$\bullet$] $\caN_{J^3([2]\times [3])}=1+27x+27x^2+x^3=(1+x)^3+24x(1+x)$.
\end{itemize}
Now it follow from Propositions \ref{prop-gamma-n-n} and \ref{prop-gamma-H-n} that these polynomials are all $\gamma$-positive.
\end{proof}

Secondly,  let us verify Conjecture B for $P=K_n$.

\begin{prop}\label{prop-minuscule-gamma}  The following are equivalent:
\begin{itemize}
\item[(a)] $\caN_{[k]\times K_n}(x)$ is monic;
\item[(b)] $\caN_{[k]\times K_n}(x)$ is palindromic;
\item[(c)] $\caN_{[k]\times K_n}(x)$ is $\gamma$-positive.
\end{itemize}
\end{prop}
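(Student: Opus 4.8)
The plan is to prove the chain (a) $\Rightarrow$ (c) $\Rightarrow$ (b) $\Rightarrow$ (a), since (c) $\Rightarrow$ (b) is immediate from the definition of $\gamma$-positivity and (b) $\Rightarrow$ (a) follows because $K_n$, hence $[k]\times K_n$, is Peck (Theorem 2 of Proctor \cite{Pr82}), hence Sperner, so $\caN_{[k]\times K_n}$ is monic exactly when $[k]\times K_n$ has a unique rank level of the largest size, which is forced once the polynomial is palindromic of the correct degree. So the real content is (a) $\Rightarrow$ (c). First I would invoke Lemma \ref{lemma-monic}(d): $\caN_{[k]\times K_n}$ is monic exactly for $k=1$ and $k=2n+1$. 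The case $k=1$ is already handled in Proposition \ref{prop-minuscule-gamma} (the first one), where $\caN_{K_n}(x)=(1+x)^2+2nx$ is visibly $\gamma$-positive. So everything reduces to computing $\caN_{[2n+1]\times K_n}(x)$ and checking it is $\gamma$-positive.

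The main step is therefore an explicit evaluation of $\caN_{[2n+1]\times K_n}(x)$. Here I would exploit the very simple structure of $K_n = [n]\oplus([1]\sqcup[1])\oplus[n]$: its ideals form a small, explicitly describable family. An ideal of $K_n$ is determined by how far up the bottom chain $[n]$ it reaches, then which of the two middle incomparable elements it contains (none, one, the other, or both), then how far up the top chain it reaches (only relevant once both middle elements are in). This gives roughly $n + 3 + n = O(n)$ ideals, and the poset $J(K_n)$ of ideals is itself essentially a chain with one "diamond" inserted. I would then apply Theorem A: set up the matrix $A_{K_n}$ (size $N(K_n)\times N(K_n)$, entries $x^{\#(\max(I_i)\setminus I_j)}$), and analyze $V_{K_n}^{(2n+1)} = A_{K_n}^{2n}V_{K_n}^{(1)}$, finally summing entries via \eqref{N-I-k-sum} to get $\caN_{[2n+1]\times K_n}(x)$. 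Because $k=2n+1$ is exactly the value making the polynomial monic (equivalently palindromic), I expect massive cancellation/collapse and a clean closed form — plausibly something of the shape $(1+x)^{2n+1}$ plus a small number of correction terms of the form $c_j\,x^j(1+x)^{2n+1-2j}$ with $c_j>0$, analogous to the $k=1$ formula $(1+x)^2+2nx$. Once the closed form is in hand, $\gamma$-positivity is read off directly, or, as a fallback, one shows the polynomial is real-rooted and palindromic and quotes Lemma \ref{lemma-gamma-positive}.

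An alternative, and possibly cleaner, route to the same closed form: rather than brute-forcing the matrix power, use the ordinal-sum decomposition of $K_n$ to reduce $\caN_{[2n+1]\times K_n}$ to a convolution involving $\caN_{[k']\times[n]}$-type data for the two chains and a small finite computation for the $[1]\sqcup[1]$ "waist." The ideal sequences $I_1\subseteq\cdots\subseteq I_{2n+1}$ in $K_n$ naturally split according to the last index at which the sequence is still entirely inside the bottom chain and the first index at which it has fully absorbed the waist; counting antichain sizes via \eqref{antichain-size} factorizes along this split. This turns the problem into: (i) a product-over-$K_n$-type identity in the spirit of \eqref{M-k-P} but for $\caN$; (ii) bookkeeping at the waist. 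I would try this in parallel with the matrix approach and keep whichever gives the tidier formula.

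The hard part will be the evaluation of $\caN_{[2n+1]\times K_n}(x)$ in closed form — both routes require careful, somewhat delicate bookkeeping of the $\max(I_j)\setminus I_{j-1}$ contributions, and the convolution route in particular demands that one correctly tracks the interaction between the two chains and the incomparable waist elements (the only place where $K_n$ is not itself a chain). Everything after that — verifying $\gamma$-positivity of the resulting polynomial, and closing the equivalence loop — should be routine. A secondary subtlety is confirming that the degree of $\caN_{[2n+1]\times K_n}$ is exactly $2n+1$ and that this is consistent with the unique-largest-rank-level computation of Lemma \ref{lemma-unique-rank-level}, but this is a direct check.
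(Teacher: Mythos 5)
The skeleton of your argument matches the paper's: by Lemma \ref{lemma-monic}(d) only $k=1$ and $k=2n+1$ need attention, $k=1$ is already settled, the implications (c)$\Rightarrow$(b)$\Rightarrow$(a) are soft (indeed (b)$\Rightarrow$(a) is immediate just from the constant term being $1$, no Sperner argument needed), so everything hinges on (a)$\Rightarrow$(c) for $k=2n+1$. But that is exactly the step you do not carry out: you only describe two strategies (powering the $(2n+4)\times(2n+4)$ matrix of Theorem A, or a convolution along the ordinal-sum decomposition of $K_n$) and then \emph{guess} that a clean closed form will emerge whose $\gamma$-positivity can be ``read off directly.'' No closed form is produced, and your guessed shape $(1+x)^{2n+1}+\sum_j c_j x^j(1+x)^{2n+1-2j}$ is already off on degree grounds: $[2n+1]\times K_n$ has a maximal antichain of size $2n+2$ (the whole of $K_n$ sits inside a single rank level when $k=2n+1$), so $\caN_{[2n+1]\times K_n}(x)$ has degree $2n+2$ and leading term $x^{2n+2}$.

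The paper closes this gap quite differently: it quotes the explicit expansion of $\caN_{[2n+1]\times K_n}(x)$ from Theorem 7.2 of \cite{DW}, massages the coefficients by a binomial identity into ${2n+2\choose i}\bigl({2n+1\choose i}+{2n\choose i-2}\bigr)$, recognizes via (62) of \cite{At} that the polynomial is the $D_{2n+2}$-Narayana polynomial ${\rm Cat}(D_{2n+2},x)$, and then invokes Gorsky's theorem \cite{Go} (Gal's conjecture for the $D$ series) for $\gamma$-positivity, with explicit $\gamma$-coefficients $\frac{2n+1-i}{2n+1}{2n+2\choose i,\,i,\,2n+2-2i}$. This last ingredient is a genuinely nontrivial theorem, not a routine verification, so your expectation that ``everything after that should be routine'' underestimates what is needed; likewise your fallback (real-rootedness plus Lemma \ref{lemma-gamma-positive}) is not established by anything in your sketch, and your hoped-for product identity ``in the spirit of \eqref{M-k-P} but for $\caN$'' is doubtful, since the actual answer does not factor that way. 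As it stands, the proposal is a plausible plan of attack with the essential computation and the essential $\gamma$-positivity input both missing.
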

\begin{proof}
It suffices to show that (a) implies (c). By Lemma \ref{lemma-monic}, $\caN_{[k]\times K_n}(x)$ is monic precisely when $k=1$ or $2n+1$. It remains to consider $k=2n+1$.  Indeed, by Theorem 7.2 of \cite{DW},
\begin{eqnarray*}
\caN_{[2n+1]\times K_n}(x)&=&(1+x^{2n+2})+(4n^2+6n+2)(x+x^{2n+1})\\
    &+&\sum_{i=2}^{2n}\left({2n \choose i-2}{2n+1\choose i-1}+{2n\choose i}{2n+1\choose i}+2{2n+1 \choose i-1}{2n+1\choose i}\right)x^i.
\end{eqnarray*}
Notice that
\begin{eqnarray*}
& &{2n\choose i-2}{2n+1\choose i-1}+{2n\choose i}{2n+1\choose i}+2{2n+1\choose i-1}{2n+1\choose i}\\
&=&{2n+1\choose i-1}\bigg({2n+1\choose i}+{2n\choose i-2}\bigg)+{2n+1\choose i}\bigg({2n\choose i}+{2n+1\choose i-1}\bigg)\\
&=&{2n+1\choose i-1}\bigg({2n+1\choose i}+{2n\choose i-2}\bigg)+{2n+1\choose i}\bigg({2n\choose i}+{2n\choose i-1}+{2n\choose i-2}\bigg)\\
&=&{2n+1\choose i-1}\bigg({2n+1\choose i}+{2n\choose i-2}\bigg)+{2n+1\choose i}\bigg({2n+1\choose i}+{2n\choose i-2}\bigg)\\
&=&{2n+2\choose i}\bigg({2n+1\choose i}+{2n\choose i-2}\bigg).
\end{eqnarray*}
Therefore, it follows from (62) of \cite{At} that
$\caN_{[2n+1]\times K_n}(x)$ coincides with ${\rm Cat}(D_{2n+2}, x)$---the $D_{2n+2}$-Narayana polynomial.
The latter polynomial is $\gamma$-positive for any $n\in\bbZ_{>0}$ by Gorsky \cite{Go} (see also Theorem 2.32 of \cite{At}). Indeed, we have that
$$
{\rm Cat}(D_{2n+2}, x)=\sum_{i=0}^{n+1}\frac{2n+1-i}{2n+1}{2n+2\choose i, i, 2n+2-2i} x^i(1+x)^{2n+2-2i}.
$$
\end{proof}
\begin{rmk} It would be interesting to find direct explanations for
$$
\caN_{[n]\times [n]}(x)={\rm Cat}(B_n, x), \quad \caN_{[2n+1]\times K_n}(x)={\rm Cat}(D_{2n+2}, x).
$$
\end{rmk}

Now let us verify Conjecture B for the two exceptional minuscule posets.

\begin{example}\label{exam-J2}
We can parameterize the ideals of $Q:=J^2([2]\times [3])$ by those $4$-tuples $(a, b, c, d)$ such that $0\leq a\leq 4$, $b=0$ or $3\leq b\leq 6$, $c=0$ or $3\leq c\leq 6$, $d=0$ or $5\leq d\leq 8$ and that
\begin{equation}\label{ideal-J2}
a\geq \min\{b, 4\}, \quad b\geq c, \quad c\geq \min\{6, d\}, \quad d\geq 0.
\end{equation}
Let $I$ be the ideal of $Q$ which is parameterized by one $4$-tuple $(a, b, c, d)$ as above. One sees that $\#\max(I)$ equals the number of inequalities in \eqref{ideal-J2} which are strict.  This allows us to obtain $V_Q^{(1)}$. Let $J$ be another ideal  of $Q$ which is parameterized by the $4$-tuple $(a_1, b_1, c_1, d_1)$. Then $J\subseteq I$ if and only if $a_1\leq a$, $b_1\leq b$, $c_1\leq c$ and $d_1\leq d$. Moreover, in such a case, $\#\big(\max(I)\setminus J\big)$ equals the number of non-zero entries in the following sequence
$$
(a-a_1)(a-\min\{b, 4\}), \quad (b-b_1)(b-c), \quad (c-c_1)(c-\min\{6, d\}), \quad
(d-d_1)d.
$$
This allows us to obtain $A_Q$. Then aided by Theorem A, we find that $\caN_{[k]\times J^2([2]\times [3])}(x)$ are $\gamma$-positive for $k=5, 11$.
Indeed, $\caN_{[5]\times J^2([2]\times [3])}(x)$ has degree $10$ and $\gamma$-coefficients
$\gamma_0=1, 70, 745, 1850, 1025, 62$; while $\caN_{[11]\times J^2([2]\times [3])}(x)$ has degree $16$ and $\gamma$-coefficients
$$
\gamma_0=1, 160, 4900, 49280, 194810, 314720, 193760, 35840, 860.
$$
Therefore, in view of Lemma \ref{lemma-monic}(e), Conjecture B holds for $J^2([2]\times [3])$.
Moreover, we have checked that $\caN_{[k]\times J^2([2]\times [3])}(x)$ is log-concave for $1\leq k \leq 11$.\hfill\qed
\end{example}

\begin{rmk}\label{rmk-J3}
Similarly, we have computed  $\caN_{[k]\times J^3([2]\times [3])}(x)$ for $k$ up to $17$.
This polynomial is not palindromic when $k=9$ or $17$. For instance,
\begin{eqnarray*}
\caN_{[17]\times J^3([2]\times [3])}(x)=
1 + 459 x +  51867 x^2+ \cdots +20564929719672 x^{12}+ 26728302338920 x^{13} \\
 +26743721449352 x^{14}+ 20605116728504 x^{15} + \cdots + 55461 x^{25}+ 483 x^{26} + x^{27}.
\end{eqnarray*}
Thus Conjecture B holds for $J^3([2]\times [3])$ in view of Lemma \ref{lemma-monic}(f). Moreover, we have checked that this polynomial is log-concave for $1\leq k \leq 17$.
\end{rmk}

\begin{figure}[H]
\centering \scalebox{0.28}{\includegraphics{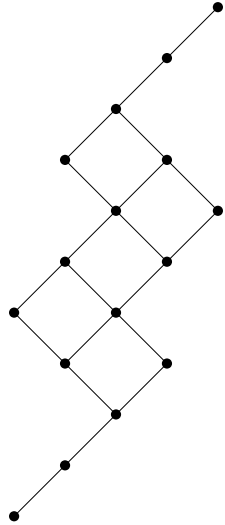}}
\caption{The Hasse diagram of $J^2([2]\times [3])$}
\label{fig-J2}
\end{figure}

To end up this section, let us present some examples suggesting that it is not easy to find  infinite family of $\gamma$-positive polynomials among $\caN_{[k]\times P}(x)$ for $P$ minuscule.

\begin{example}\label{example-many}
(a) The polynomial
$$
\caN_{[3]\times [3]\times [3]}(x)=1+27 x+162 x^2+350 x^3+310 x^4+114 x^5+15 x^6+x^7.
$$
It is not palindromic, and sits in the family $\caN_{[3]\times [n]\times [n]}(x)$.

(b) The polynomial  $\caN_{[3]\times [3]\times [5]}(x)$ equals
$$
1 + 45 x + 495 x^2 + 2155 x^3 + 4360 x^4 + 4360 x^5 + 2141 x^6 +
 505 x^7 + 49 x^8 + x^9.
$$
It is not palindromic and  sits in the family $\caN_{[3]\times [n]\times [n+2]}(x)$.

(c) The polynomial $\caN_{[4]\times [3]\times [4]}(x)$ equals
$$
1+48 x+576 x^2+2800 x^3+6525 x^4+7848 x^5+4957 x^6+1644 x^7+274 x^8+22 x^9+x^{10}.
$$
It is not palindromic, and sits in the family $\caN_{[4]\times [n]\times [n+1]}(x)$.

(d) The polynomial $\caN_{[4]\times [3]\times [6]}(x)$ looks like
$$
1 + 72 x + 1368 x^2 + \cdots + 103200 x^5 +
 134806 x^6 + 102912 x^7 + \cdots + 1510 x^{10} +
 86 x^{11} + x^{12}.
$$
It is not palindromic, and sits in the family $\caN_{[4]\times [n]\times [n+3]}(x)$.

(e) The polynomial $\caN_{[5]\times [3]\times [7]}(x)$ looks like
$$
1+105 x+3045 x^2+\cdots+4080285 x^7+4078275 x^8+\cdots+3692 x^{13}+137 x^{14}+x^{15}.
$$
It is not palindromic, and sits in the family $\caN_{[5]\times [n]\times [n+4]}(x)$.

(f) The polynomial $\caN_{[3]\times H_6}(x)$ equals
$$
1 + 63 x + 840 x^2 + 4088 x^3 + 8736 x^4 + 8736 x^5 + 4060 x^6 +
 862 x^7 + 69 x^8 + x^9.
$$
It is not palindromic, and sits in the family $\caN_{[3]\times H_n}$ for $n$ even.

(g) The polynomial $\caN_{[9]\times H_5}(x)$ looks like
$$
1 + 135 x + 4455 x^2  +\cdots+7209048 x^7 + 7206012 x^8+\cdots +4745 x^{13}+ 145 x^{14} + x^{15}.
$$
It is not palindromic, and sits in the family $\caN_{[2n-1]\times H_n}$.

All the polynomials above are log-concave. \hfill\qed
\end{example}

\section{The family $\caN_{[2] \times [n]\times [n+1]}(x)$}
This section aims to study the polynomials $\caN_{[2] \times [n]\times [n+1]}(x)$.
It follows from Lemma \ref{lemma-monic}(a) that $\caN_{[2]\times [n]\times [n+1]}(x)$ is monic and has degree $2n$.
By Theorem 6 of Proctor \cite{Pr84}, we have that
$$
\caN_{[2]\times [n]\times [n+1]}(1)=\prod_{\alpha\in [n]\times [n+1]} \frac{r(\alpha)+ 2}{r(\alpha)}=(2n+1)C_n C_{n+1},
$$
where $C_n=\frac{1}{n+1}{2n \choose n}$ is the $n$-th Catalan number.

Let us warm up with an example.
\begin{example}\label{exam-2-3}
We can enumerate all the ideals of $Q:=[2]\times [3]$ by the following $2$-tuples:
$$
(0, 0), (0, 1), (0, 2), (0, 3), (1, 1), (1, 2), (1, 3), (2, 2), (2,
3), (3, 3).
$$
If an ideal $I$ corresponds to the $2$-tuple $(a_1, a_2)$, we have that $\#\max (I)$ equals $\#\{a_i\mid a_i>0\}$. Thus one calculates that
$$
V_Q^{(1)}=[1, x, x, x, x, x^2, x^2, x, x^2, x]^T.
$$
Summing up these components leads us to that
$\caN_{ Q}(x)= 1+6x+3x^2$.

If another ideal $J$ corresponds to the $2$-tuple $(b_1, b_2)$, then $J\subseteq I$ if and only if $b_1\leq a_1$ and $b_2\leq a_2$. Then
$$
\#\big(\max(I)\setminus J\big)=\#\{a_i\mid a_i>b_i\},
$$
and one calculates that
\[
A_Q=
\begin{bmatrix}
1 & 0 & 0 & 0 &  0 & 0 & 0 & 0 &  0 & 0\\
x &  1 & 0 & 0 &  0 & 0 & 0 & 0 &  0 & 0 \\
x&x&1&0 &  0 & 0 & 0 & 0 &  0 & 0\\
x&x&x&1&0 & 0 & 0 & 0 &  0 & 0\\
x&x&0&0&1&0 & 0 & 0 &  0 & 0\\
x^2&x^2&x&0&x&1 & 0 & 0 &  0 & 0\\
x^2&x^2&x^2&x&x&x&1&0&0&0\\
x&x&x&0&x&x&0&1&0&0\\
x^2&x^2&x^2&x&x^2&x^2&x&x&1&0\\
x&x&x&x&x&x&x&x&x&1
\end{bmatrix}.
\]
Then for instance, by \eqref{N-I-k-K+1-matrix}, we have that
\begin{eqnarray*}
V_Q^{(4)}&=&A_Q^{3}V^{(1)}_Q\\
&=&[1, 4 x, 4 x + 6 x^2, 4 x + 12 x^2 + 4 x^3, 4 x + 6 x^2,
16 x^2 + 14 x^3, \\
& &16 x^2 + 38 x^3 + 11 x^4,
4 x + 18 x^2 + 22 x^3 + 6 x^4, 16 x^2 + 52 x^3 + 48 x^4 + 9 x^5,\\
& &4 x + 30 x^2 + 70 x^3 + 55 x^4 + 15 x^5 + x^6]^T.
\end{eqnarray*}
Summing up these components gives that
$$
\caN_{[4]\times Q}(x)= 1+24 x+120 x^2+200 x^3+120 x^4+24 x^5+x^6.
$$
There are many interlacing relations among $\caN_I^4(x)$. To mention a few, we have
\begin{eqnarray*}
\caN_{(0, 0)}^4(x)+ \caN_{(0, 1)}^4(x) &\preceq& \caN_{(0, 2)}^4(x),\\
\caN_{(0, 0)}^4(x)+ \caN_{(0, 1)}^4(x)+\caN_{(0, 2)}^4(x) &\preceq& \caN_{(0, 3)}^4(x),\\
\caN_{(1, 1)}^4(x)+ \caN_{(1, 2)}^4(x) &\preceq& \caN_{(1, 3)}^4(x),\\
\sum_{j=0}^{3}\caN_{(0, j)}^4(x) &\preceq&
\sum_{j=1}^{3}\caN_{(1, j)}^4(x),\\
\sum_{i=0}^{2}\sum_{j=i}^{3}\caN_{(i, j)}^4(x) &\preceq&
\caN_{(3, 3)}^4(x).
\end{eqnarray*}
\hfill\qed
\end{example}

Many other calculations lead us to the following.
\begin{conj}\label{conj-inter-mat} Fix $Q=[2]\times [n]$. Parameterize the ideals of $Q$ by the $2$-tuples $(a, b)$ such that $0\leq a\leq b\leq n$.  Fix any $k\geq 0$. Then the polynomials $\sum_{i=0}^{s}\sum_{l=i}^{n} \caN_{(i, l)}^k(x)$ and $\sum_{l=s+1}^{n}\caN_{(s+1, l)}^k(x)$ have common interleavers for $0\leq s\leq n-1$.
\end{conj}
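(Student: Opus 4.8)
\medskip
\noindent\textbf{Proof proposal.}
The plan is to deduce the conjecture from a cleaner statement about the ``row sums'' $R_a^k(x):=\sum_{l=a}^{n}\caN_{(a,l)}^k(x)$, $0\le a\le n$, which I would prove by induction on $k$. In the notation of the conjecture the two polynomials in question are $F_s:=\sum_{a=0}^{s}R_a^k$ and $G_s:=R_{s+1}^k$, so every $\alpha F_s+\beta G_s$ with $\alpha,\beta\ge 0$ is a nonnegative linear combination of $R_0^k,\dots,R_n^k$. By the theory of compatible polynomials (Chudnovsky--Seymour \cite{CS}), if the finite family $\{R_0^k,\dots,R_n^k\}$ is \emph{compatible} --- i.e. every nonnegative linear combination of its members is real-rooted --- then in particular each pair $\{F_s,G_s\}$ is compatible, hence $F_s$ and $G_s$ admit a common interleaver, which is exactly the assertion of the conjecture. (The same input would also give real-rootedness of $\caN_{[k]\times[2]\times[n]}(x)=\sum_{a=0}^{n}R_a^k$, i.e. a case of Conjecture~\ref{conj-real-rooted-2-n-k}.) So the task is reduced to: \emph{for every $k$, the family $\{R_0^k,\dots,R_n^k\}$ is compatible.}

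To prove this by induction on $k$ I would unfold the recursion \eqref{N-I-k-K+1}. Using the description of the ideals of $[2]\times[n]$ and of the exponents $\#(\max(I)\setminus J)\in\{0,1,2\}$ from Example~\ref{exam-2-3}, one obtains, for $0<a<l$,
\[
\caN_{(a,l)}^{k+1}=\caN_{(a,l)}^k+x\sum_{l'=a}^{l-1}\caN_{(a,l')}^k+x\sum_{a'=0}^{a-1}\caN_{(a',l)}^k+x^2\sum_{a'=0}^{a-1}\sum_{l'=a'}^{l-1}\caN_{(a',l')}^k,
\]
with evident modifications when $a=0$ or $a=l$. The induction cannot be run on the $R_a^k$ alone: one needs a strengthened hypothesis asserting a whole poset of interlacing relations at level $k$ --- for instance that each $\caN_{(a,l)}^k$ is real-rooted with nonpositive roots, that ``neighbouring'' ones interlace, that the partial sums along rows and columns interlace the next term (as observed in Example~\ref{exam-2-3}), and that some extremal polynomial such as $\caN_{(n,n)}^k$ serves as a common interleaver for everything of comparable degree. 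Granting such a hypothesis, one would propagate it through the displayed transformation using only standard tools: $f\preceq xf$ and $f\preceq g\Rightarrow xf\preceq xg$ for real-rooted $f,g$ with nonpositive roots, Obreschkoff's theorem (Theorem~\ref{thm-Ob}) to certify individual interlacings by real-rootedness of two-term combinations, the stability of ``$\preceq H$'' under nonnegative sums, and Chudnovsky--Seymour \cite{CS} to pass between compatibility and common interleavers. The base case $k=1$ is immediate since $\caN_{(a,l)}^1(x)=x^{\#\max((a,l))}$ is a monomial of degree at most $2$.

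The main obstacle --- and surely the reason this remains a conjecture --- is the $x^2$ term above. Multiplication by $x^2$ raises the degree by two, so $x^2\caN_{(a',l')}^k$ does \emph{not} interlace $\caN_{(a',l')}^k$, and indeed the naive enlarged family $\{\caN_{(a',l')}^k\}\cup\{x\caN_{(a',l')}^k\}\cup\{x^2\caN_{(a',l')}^k\}$ is \emph{not} compatible (e.g. $1+x^2 g(x)$ is typically not real-rooted). Hence one cannot argue term by term; the entire sum over $J\subseteq(a,l)$ must be reorganized so that the degree-jumping contributions recombine with the lower-order ones into blocks of the shape $x\cdot(\text{larger partial sum})$ that the inductive interlacing poset already controls. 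Finding the right telescoping identity among the partial sums $\sum_{a'\le a,\,l'\le l}\caN_{(a',l')}^k$ --- essentially a refinement of the conjecture already at the level of the matrix $A_Q$, which would simultaneously explain the interlacing relations listed in Example~\ref{exam-2-3} --- is the crux; a secondary induction on $n$ (relating $[2]\times[n]$ to $[2]\times[n-1]$ via the ideals missing the top rank level) may be needed to keep this reorganization tractable.
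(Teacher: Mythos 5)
This statement is a conjecture in the paper: the authors give no proof, only computational evidence (the interlacing relations listed in Example \ref{exam-2-3} and further unpublished calculations), so there is no argument of theirs to compare yours against. Your proposal, by its own admission, is not a proof either. What you actually establish is only a reduction: \emph{if} the family of row sums $R_a^k=\sum_{l=a}^n\caN_{(a,l)}^k$ were compatible in the sense of Chudnovsky--Seymour, then each pair $\bigl(\sum_{a\le s}R_a^k,\,R_{s+1}^k\bigr)$ would be compatible and hence admit a common interleaver. That reduction step is sound (and it would also yield Conjecture \ref{conj-real-rooted-2-n-k}), but it replaces the conjecture by an assertion that is at least as strong and is nowhere proved.

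The genuine gap is the entire inductive step. The ``strengthened hypothesis'' you invoke --- real-rootedness of all $\caN_{(a,l)}^k$, interlacing of neighbours, partial row/column sums interlacing the next term, $\caN_{(n,n)}^k$ acting as a universal common interleaver --- is never stated precisely, let alone shown to be preserved under the recursion \eqref{N-I-k-K+1}; it is exactly the content that a proof would have to supply. You correctly identify why the naive propagation fails (the $x^2$-terms raise degree by two, so $x^2\caN_{(a',l')}^k$ does not interlace $\caN_{(a',l')}^k$ and the enlarged family is not compatible), but the ``right telescoping identity'' that would recombine these contributions is left as an open problem. Beyond the trivial base case $k=1$, no interlacing relation is actually verified, so the argument proves nothing about the conjectured common interleavers; it is a plausible research plan, not a proof.
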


If Conjecture \ref{conj-inter-mat} holds, setting $s=n-1$ there would give us that
$\sum_{i=0}^{n-1}\sum_{l=i}^{n} \caN_{(i, l)}^k(x)$ and $\caN_{(n, n)}^k(x)$ have a common interleaver.
Thus by 3.6 of \cite{CS},
$$
\caN_{[2]\times [n]\times [k]}(x)=\caN_{[k]\times Q}(x)=\sum_{i=0}^{n-1}\sum_{l=i}^{n} \caN_{(i, l)}^k(x)+\caN_{(n, n)}^k(x)
$$
would be real-rooted.

\begin{conj}\label{conj-real-rooted-2-n-k} The polynomial
$\caN_{[2]\times [n]\times [k]}(x)$ is real-rooted for any $n, k\in\bbZ_{>0}$.
\end{conj}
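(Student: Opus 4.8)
The plan is to deduce Conjecture \ref{conj-real-rooted-2-n-k} from Conjecture \ref{conj-inter-mat}, exactly as sketched in the paragraph following the latter: granting that $\sum_{i=0}^{n-1}\sum_{l=i}^{n}\caN_{(i,l)}^k(x)$ and $\caN_{(n,n)}^k(x)$ admit a common interleaver, real-rootedness of their sum $\caN_{[2]\times[n]\times[k]}(x)$ is immediate from 3.6 of \cite{CS}. So the whole burden is Conjecture \ref{conj-inter-mat}, and I would prove it by induction on $k$, using the transfer recursion \eqref{N-I-k-K+1} specialized to $Q=[2]\times[n]$ as the engine; even though only the top instance $s=n-1$ is needed at the end, one is essentially forced to carry the whole range of $s$ (and more) through the induction.

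First I would make the recursion completely explicit. Parameterizing ideals of $Q=[2]\times[n]$ by pairs $(a,b)$ with $0\le a\le b\le n$, one has $\#\max I_{(a,b)}=[a>0]+[a<b]$, and for $I_{(c,d)}\subseteq I_{(a,b)}$ that $\#\big(\max(I_{(a,b)})\setminus I_{(c,d)}\big)=[c<a]+[a<b][d<b]$; feeding this into \eqref{N-I-k-K+1} writes each $\caN_{(a,b)}^{k+1}(x)$ as an explicit nonnegative $\bbZ[x]$-combination of the $\caN_{(c,d)}^{k}(x)$ with $c\le a$ and $c\le d\le b$. The reason to build the argument around the triangular sums $P_s^{(k)}:=\sum_{i=0}^{s}\sum_{l=i}^{n}\caN_{(i,l)}^{k}$ and the row sums $Q_s^{(k)}:=\sum_{l=s+1}^{n}\caN_{(s+1,l)}^{k}$ of Conjecture \ref{conj-inter-mat} is that I expect them (or a mild enlargement of this collection) to close up under the recursion: each $P_s^{(k+1)}$ and $Q_s^{(k+1)}$ should be a nonnegative combination of the $P_t^{(k)}$, $Q_t^{(k)}$ and their multiples by powers of $x$. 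Note that $P_{s+1}^{(k)}=P_s^{(k)}+Q_s^{(k)}$ and $P_n^{(k)}=\caN_{[2]\times[n]\times[k]}(x)$, so a common interleaver for $P_s^{(k+1)}$ and $Q_s^{(k+1)}$ is exactly what makes $P_{s+1}^{(k+1)}$ real-rooted.

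The inductive step should then reduce to a fixed toolkit of interlacing manipulations applied to the explicit formulas: (i) the cone property — for fixed real-rooted $h$ with a prescribed sign of leading coefficient, $\{f:f\preceq h\}\cup\{0\}$ and $\{f:h\preceq f\}\cup\{0\}$ are convex cones, so a common interleaver survives nonnegative linear combinations; (ii) the behaviour of interlacing under $f\mapsto xf$ and $(f,g)\mapsto f+xg$ — harmless here because every $\caN_{(a,b)}^{k}$ has nonnegative coefficients, hence only nonpositive roots — each instance certified when needed by Obreschkoff (Theorem \ref{thm-Ob}) from real-rootedness of two-term combinations; (iii) degree bookkeeping, using that $[k]\times Q$ is Peck, hence Sperner, together with Lemmas \ref{lemma-unique-rank-level} and \ref{lemma-monic}, to pin down $\deg P_s^{(k)}$ and $\deg Q_s^{(k)}$ and, crucially, to know when consecutive sums differ in degree by $0$ or by $1$, which is what makes ``$\preceq$'' between them meaningful. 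The base case $k=1$ is a finite computation: every $\caN_{(a,b)}^1$ is a monomial $x^{[a>0]+[a<b]}$, so each $P_s^{(1)}$ and $Q_s^{(1)}$ has degree at most $2$ and the required common interleavers can be exhibited uniformly in $n$.

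The main obstacle, I expect, is twofold. The larger part is guessing the correct \emph{strengthening} of Conjecture \ref{conj-inter-mat}: the bare pairwise statement is almost certainly not self-propagating under the recursion, so one must carry along a full coherent web of interlacings among all the $\caN_{(a,b)}^{k}$ and their partial sums — morally an interlacing family indexed by a linear (or, in the spirit of Fisk's tree-indexed mutually interlacing families, tree-shaped) order, rigid enough that the explicit transfer recursion regenerates it at level $k+1$. The examples in Section 4 and the extra relations recorded at the end of Example \ref{exam-2-3} are the data from which the right hypothesis should be reverse-engineered, and finding one that is simultaneously true and closed under the recursion is the crux. The second part is that, even with the right hypothesis in hand, the map $\caN_{(\cdot,\cdot)}^{k}\mapsto\caN_{(\cdot,\cdot)}^{k+1}$ is a genuine matrix mixing many terms carrying different powers of $x$, not a single classical real-rootedness preserver, so each interlacing needed at level $k+1$ must be assembled by hand out of (i)--(iii), and keeping the direction-and-degree bookkeeping consistent across the whole web is where the work concentrates. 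A possibly cleaner alternative would be to show directly that, in a suitable ordering of the ideals of $[2]\times[n]$, the transfer matrix $A_Q$ of Theorem A preserves the property of being an interlacing sequence of polynomials, and that $V_Q^{(1)}$ is already such a sequence; this would yield real-rootedness of $\caN_{[2]\times[n]\times[k]}(x)=\sum_i\big(V_Q^{(k)}\big)_i$ at one stroke, but identifying the correct ordering and verifying the matrix condition looks at least as delicate. (One could also hope for a multivariate stable lift specializing to $\caN_{[2]\times[n]\times[k]}(x)$, but I have no candidate.)
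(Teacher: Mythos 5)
This statement is labeled a conjecture in the paper, and the paper offers no proof of it: the only thing the authors establish is the conditional reduction you quote, namely that Conjecture \ref{conj-inter-mat} (at $s=n-1$) together with 3.6 of \cite{CS} would yield real-rootedness of $\caN_{[2]\times[n]\times[k]}(x)$. Your proposal reproduces exactly this reduction and then shifts the entire burden onto Conjecture \ref{conj-inter-mat}, which you propose to prove by induction on $k$ via the recursion \eqref{N-I-k-K+1} — but you never carry this out. You yourself concede that the bare statement of Conjecture \ref{conj-inter-mat} is ``almost certainly not self-propagating'' and that the crux is to reverse-engineer a strengthening that is both true and closed under the recursion; no such strengthening is exhibited, no inductive step is verified, and the base case $k=1$ alone proves nothing. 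So there is a genuine gap: the key step (a common interleaver for $\sum_{i=0}^{n-1}\sum_{l=i}^{n}\caN_{(i,l)}^{k}$ and $\caN_{(n,n)}^{k}$, or any stable web of interlacings regenerated by the transfer matrix) remains entirely conjectural, and your text is a research plan rather than a proof.

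Beyond the missing crux, the toolkit you list would not suffice as stated. The cone property (i) concerns nonnegative \emph{real} combinations of polynomials sharing a fixed interleaver; the recursion \eqref{N-I-k-K+1} instead mixes the $\caN_{(c,d)}^{k}$ with coefficients that are different \emph{powers of $x$} depending on $(c,d)$, and neither $f\mapsto xf$ nor $(f,g)\mapsto f+xg$ preserves a prescribed interlacing relation with a third polynomial in general, even for polynomials with nonnegative coefficients; Obreschkoff's criterion (Theorem \ref{thm-Ob}) certifies a single interlacing from real-rootedness of all two-term real combinations, but it gives no mechanism for producing the real-rootedness you would need at level $k+1$ from level $k$ without precisely the kind of stronger, recursion-compatible hypothesis you admit you do not have. (Minor point in your favor: your formula $\#\max I_{(a,b)}=[a>0]+[a<b]$ is the correct one, consistent with $V_Q^{(1)}$ in Example \ref{exam-2-3}, even though the paper's prose there is stated loosely.)
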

\begin{rmk}
The polynomial $\caN_{[3]\times [3]\times [3]}(x)$
(see Example \ref{example-many}(a)) is not real-rooted.
\end{rmk}

The polynomials $\caN_{[2]\times [n]\times [n+1]}(x)$ for the first few values of $n$ are computed as below. For convenience, here we only present the $\gamma$-coefficients $\gamma_0=1$, $\gamma_1$, $\dots$, $\gamma_n$ such that
$$
\caN_{[2]\times [n]\times [n+1]}(x)=\sum_{i=0}^{n} \gamma_i x^i (1+x)^{2n-2i}.
$$

\begin{center}
\begin{tabular}{l|r}
$n$ &  The $\gamma$-coefficients of $\caN_{[2]\times [n]\times [n+1]}(x)$ \\ \hline
 $1$ & $1$, $2$ \\
 $2$ & $1$, $8$, $2$ \\
 $3$ & $1$, $18$, $33$, $6$ \\
 $4$ & $1$, $32$, $150$ , $144$,  $12$ \\
 $5$ & $1$, $50$, $440$, $1040$, $580$, $40$\\
 $6$ & $1$, $72$, $1020$, $4480$, $6300$, $2400$, $100$\\
 $7$ & $1$, $98$, $2037$, $14350$, $37730$, $35700$, $9625$, $350$\\
 $8$ & $1$, $128$,  $3668$,  $37856$, $160020$, $282240$, $191100$, $39200$, $980$\\
 $9$ & $1$, $162$, $6120$, $ 87024$,  $539532$,
$1528632$, $1933344$,  $987840$,   $156996$, $3528$\\
$10$ & $1$, $200$, $9630$, $180480$, $1542660$, $6408864$, $13028400$, $12418560$, $4948020$, $635040$, $10584$
\end{tabular}
\end{center}

\begin{conj}\label{conj-2-n-n+1}
The polynomial $\caN_{[2]\times [n] \times [n+1]}(x)$ is palindromic and real-rooted (thus $\gamma$-positive) for every $n$.
\end{conj}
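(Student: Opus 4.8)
The statement bundles two assertions---palindromicity and real-rootedness---and $\gamma$-positivity is then automatic from Lemma~\ref{lemma-gamma-positive}. These call for rather different techniques, so I would treat them separately. Monicity is not at issue: by Lemma~\ref{lemma-monic}(a), applied with $P=[n]\times[n+1]$ so that $k=2$ is the smallest value in the monic list $2,4,\dots,2n$, the polynomial $\caN_{[2]\times[n]\times[n+1]}(x)$ is monic of degree $2n$. However, monicity does \emph{not} force palindromicity in general---compare Remark~\ref{rmk-J3}, where $\caN_{[9]\times J^3([2]\times[3])}$ is monic but not palindromic---so the symmetry $a_i=a_{2n-i}$ of the coefficients genuinely has to be proved.

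\textbf{Palindromicity.} I would try to produce a weight-complementing involution on the objects enumerated. Using \eqref{N-I-k} with $k=2$ and base poset $Q=[n]\times[n+1]$, one has $\caN_{[2]\times Q}(x)=\sum_{J\subseteq I}x^{\#\max(J)+\#(\max(I)\setminus J)}$, the sum running over nested pairs of ideals of $Q$. Since $Q$ is a product of chains it carries the order-reversing self-duality $(i,j)\mapsto(n+1-i,\,n+2-j)$, which interchanges ideals with filters; combining this with the bijection $I\leftrightarrow\max(I)$ one can hope to build an involution on nested pairs sending total weight $w$ to $2n-w$. A natural fallback, given the precedents $\caN_{[n]\times[n]}=\mathrm{Cat}(B_n,x)$ and $\caN_{[2n+1]\times K_n}=\mathrm{Cat}(D_{2n+2},x)$ together with the evaluation $\caN_{[2]\times[n]\times[n+1]}(1)=(2n+1)C_nC_{n+1}$, is to search for a closed Narayana-type formula for $\caN_{[2]\times[n]\times[n+1]}(x)$, from which palindromicity (and perhaps more) would be transparent.

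\textbf{Real-rootedness.} Here I would aim to prove the stronger Conjecture~\ref{conj-inter-mat} for $Q=[2]\times[n]$, which, as explained in the text, yields real-rootedness of $\caN_{[2]\times[n]\times[k]}(x)$ for every $k$ via the common-interleaver criterion of Chudnovsky--Seymour \cite{CS}. The engine is the recursion \eqref{N-I-k-K+1}, equivalently Theorem~A's identity $V_Q^{(k+1)}=A_Q V_Q^{(k)}$, in which $A_Q$ has monomial entries $x^{\#(\max(I_i)\setminus I_j)}$. The induction is on $k$; the inductive hypothesis must be a coherent web of interlacing relations among the coordinates $\caN_{(a,b)}^{k}(x)$ (indexed by ideals $(a,b)$ with $0\le a\le b\le n$)---for instance that every partial sum $\sum_{i\le s}\sum_{l\ge i}\caN_{(i,l)}^{k}$ and every row sum $\sum_{l\ge s+1}\caN_{(s+1,l)}^{k}$ interlace according to a fixed pattern like those in Example~\ref{exam-2-3}---strong enough that applying $A_Q$ reproduces it. Obreschkoff's theorem~\ref{thm-Ob}, in the guise ``a nonnegative combination of mutually interlacing polynomials is real-rooted'', is the main tool, and the base case ($k\le 2$, where the coordinates are explicit) is a finite check.

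\textbf{Main obstacle.} I expect the real-rootedness half to be harder, and within it the delicate feature is that the off-diagonal entries of $A_Q$ carry positive powers of $x$: multiplication by $x$ reverses the direction of interlacing ($xf\preceq f$ rather than $f\preceq xf$), so the web of relations in the inductive hypothesis must be engineered so that these shifts are absorbed consistently. Pinning down the exact form of that hypothesis---strong enough to be self-propagating, and correctly matched to which differences $\max(I_i)\setminus I_j$ have which size---is where the substantive work lies. For palindromicity the risk is simply that no clean involution exists; failing that one is thrown back on finding and exploiting a closed formula, after which palindromicity is free but real-rootedness still needs a separate argument unless the formula exhibits $\caN_{[2]\times[n]\times[n+1]}(x)$ as a product of real-rooted factors or as the image of $(1+x)^{2n}$ under a root-preserving operator, in the spirit of the $\mathrm{E}_2$ argument of Proposition~\ref{prop-gamma-H-n} and \cite{AS}.
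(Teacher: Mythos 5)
The statement you are addressing is a conjecture: the paper offers no proof of it, only the computed $\gamma$-coefficients of $\caN_{[2]\times[n]\times[n+1]}(x)$ for $n\le 10$, the observation that Conjecture~\ref{conj-inter-mat} would imply real-rootedness of the whole family $\caN_{[2]\times[n]\times[k]}(x)$ via the common-interleaver result of Chudnovsky--Seymour, and the Young-tableau recursion of Proposition~\ref{lemma-recursive-young} as a possible tool. Measured against that, your submission is a research plan rather than a proof, and it leaves both halves of the statement unestablished. For real-rootedness you propose to prove Conjecture~\ref{conj-inter-mat}, i.e.\ you reduce one open conjecture to another open conjecture without supplying the inductive hypothesis you yourself identify as the crux (a self-propagating web of interlacing relations stable under multiplication by $A_Q$); for palindromicity you propose either an involution or a closed Narayana-type formula, but you construct neither, and the self-duality of $[n]\times[n+1]$ composed with $I\leftrightarrow\max(I)$ does not obviously send the statistic $\#\max(J)+\#(\max(I)\setminus J)$ to $2n$ minus itself, so the ``natural'' involution is precisely the missing content. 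Your framing is sound and matches the paper's own suggested avenues (Lemma~\ref{lemma-monic}(a) for monicity and degree $2n$, Lemma~\ref{lemma-gamma-positive} to get $\gamma$-positivity once palindromicity and real-rootedness are known, Theorem~A as the computational engine), but no step of the conjecture is actually proved.

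One technical slip worth flagging in your ``main obstacle'' paragraph: under the paper's convention $f\preceq g$ requires $\deg f\le\deg g\le\deg f+1$ with the largest root belonging to $g$, so for a polynomial $f$ with nonnegative coefficients (hence nonpositive real roots) one has $f\preceq xf$, not $xf\preceq f$; the latter is impossible already on degree grounds. The genuine difficulty caused by the monomial entries of $A_Q$ is not a reversal of interlacing direction but that sums of shifted terms $x^{m}\caN_{I_j}^{k}(x)$ with varying exponents $m$ need a common interleaver (Theorem~\ref{thm-Ob} alone, applied pairwise, is not enough), which is exactly what Conjecture~\ref{conj-inter-mat} is designed to supply; see the sample relations in Example~\ref{exam-2-3}. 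If you pursue this, the recursion of Proposition~\ref{lemma-recursive-young} for $f_{n,n}^{k}$ is likely the better-adapted vehicle for the palindromicity half, since $f_{n,n}^{n+1}=\caN_{[2]\times[n]\times[n+1]}(x)$ and the recursion keeps track of exactly the statistic whose symmetry you need.
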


Finally, let us deduce a recursive formula for  $\caN_{[2] \times [n]\times [n+1]}(x)$ by Young tableau. Indeed, fix a Young tableau with two rows. Let the first row have length $n$, and the second row have length $m$, where $m\leq n$. We fill the boxes in the first row with numbers $0\leq a_1\leq a_2\leq \cdots\leq a_n\leq k$, and fill the boxes in the second row with numbers $0\leq b_1\leq \cdots \leq b_m\leq k$. Require that $a_i\leq b_i$ for $1\leq i\leq m$. For instance, when $n=2$, $m=1$ and $k=1$, there are five such Young tableaux:
\[
\young(00,0) \qquad\young(01,0)\qquad \young(00,1)  \qquad \young(01,1) \qquad \young(11,1)
\]
Given such a Young tableau, we associate the monomial
$$
x^{\#\{a_i\mid a_i>0\}+\# \{b_j \mid b_j>a_j \}}
$$
to it. For instance, the monomials associated to the five Young tableaux above are
$1, x, x, x^2, x$ respectively. We denote by $f_{n, m}^k$ the sum of all the monomials associated to the Young tableaux defined above with row lengths $n$, $m$ and filling numbers no bigger than $k$. For example,
$f_{2, 1}^1=1+3x+x^2$.

\begin{prop}\label{lemma-recursive-young} We have the recursive formula
\begin{equation}
f_{n, n}^{k+1}=f_{n, n}^k+x\sum_{s=1}^{n} f_{n, n-s}^k+\sum_{l=1}^n(xf_{n-l, n-l}^{k}+x^2\sum_{s=1}^{n-l}f_{n-l, n-l-s}^k).
\end{equation}
\end{prop}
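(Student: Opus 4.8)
The plan is to prove the recursion by a bijection that strips off every entry equal to $k+1$, reducing a tableau counted by $f^{k+1}_{n,n}$ to a smaller tableau counted by some $f^{k}_{\,n-p,\,n-q}$, while tracking the weight up to an explicit power of $x$. Fix a tableau $T$ counted by $f^{k+1}_{n,n}$, with top row $0\le a_1\le\cdots\le a_n\le k+1$ and bottom row $0\le b_1\le\cdots\le b_n\le k+1$ satisfying $a_i\le b_i$, and attach to it the pair $(p,q)$ with $p=\#\{i:a_i=k+1\}$ and $q=\#\{j:b_j=k+1\}$. Since the rows are weakly increasing, the boxes containing $k+1$ are exactly the last $p$ boxes of the top row and the last $q$ boxes of the bottom row; and $a_i\le b_i\le k+1$ forces $a_i=k+1\Rightarrow b_i=k+1$, hence $0\le p\le q\le n$. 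Deleting every box containing $k+1$ produces a tableau $T'$ with top row $a_1\le\cdots\le a_{n-p}\le k$ and bottom row $b_1\le\cdots\le b_{n-q}\le k$; the inequalities $a_i\le b_i$ survive for $i\le n-q$, so $T'$ is one of the tableaux enumerated by $f^{k}_{\,n-p,\,n-q}$ (note $n-q\le n-p$). Conversely, appending $p$ copies of $k+1$ to the top row and $q$ copies to the bottom row of a tableau counted by $f^{k}_{\,n-p,\,n-q}$ reconstructs $T$, the appended part being admissible precisely because $p\le q$ (for $n-q<i\le n-p$ one has $b_i=k+1>k\ge a_i$, and for $i>n-p$ one has $a_i=b_i=k+1$). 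Thus $T\mapsto(T',p,q)$ is a bijection onto $\bigsqcup_{0\le p\le q\le n}\{\text{tableaux counted by }f^{k}_{\,n-p,\,n-q}\}$.

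Next I would compare weights, reading the two statistics as cardinalities of \emph{sets of values} (so e.g.\ the first records the number of distinct positive values among the $a_i$). Write $w(T)=\#\{a_i:a_i>0\}+\#\{b_j:b_j>a_j\}$. For the first summand, the value $k+1$ is positive and occurs in the top row of $T$ iff $p\ge1$, and it never occurs in the top row of $T'$ (whose entries are all $\le k$); so this summand drops by $1$ if $p\ge1$ and by $0$ otherwise. For the second summand, the value $k+1$ lies in $\{b_j:b_j>a_j\}$ iff some column has bottom entry $k+1$ and strictly smaller top entry, i.e.\ iff some column index lies in $(n-q,\,n-p]$, which happens iff $q>p$; every other value in this set is $\le k$ and hence comes from a column $j\le n-q$ with $b_j>a_j$, i.e.\ exactly the columns contributing that value to $T'$. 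So this summand drops by $1$ if $q>p$ and by $0$ otherwise. Writing $\chi(\cdot)$ for the indicator function, $w(T)=w(T')+\chi(p\ge1)+\chi(q>p)$, and summing $x^{w(T)}$ over all $T$ yields
\[
f^{k+1}_{n,n}=\sum_{0\le p\le q\le n}x^{\,\chi(p\ge1)+\chi(q>p)}\,f^{k}_{\,n-p,\,n-q}.
\]

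Finally I would split this sum according to the four cases $(p,q)=(0,0)$, $\;p=0<q$, $\;1\le p=q$, and $\;1\le p<q$. The first gives $f^{k}_{n,n}$; the second gives $x\sum_{q=1}^{n}f^{k}_{\,n,\,n-q}=x\sum_{s=1}^{n}f^{k}_{\,n,\,n-s}$; the third gives $\sum_{p=1}^{n}x\,f^{k}_{\,n-p,\,n-p}=\sum_{l=1}^{n}x\,f^{k}_{\,n-l,\,n-l}$; and the fourth, after substituting $l=p$, $s=q-p$ (so $1\le l\le n$, $1\le s\le n-l$, the terms with $l=n$ being vacuous), gives $\sum_{l=1}^{n}x^{2}\sum_{s=1}^{n-l}f^{k}_{\,n-l,\,n-l-s}$. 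Adding the four contributions is exactly the claimed recursion. The one genuinely delicate point is the weight computation in the second step: because the two statistics count distinct values rather than boxes, deleting an entire block of $(k+1)$'s changes the weight by the bounded amount $\chi(p\ge1)+\chi(q>p)\in\{0,1,2\}$ — which is precisely what makes every correction term in the recursion carry a factor $x$ or $x^{2}$ — whereas the bijection and the final re-indexing are routine.
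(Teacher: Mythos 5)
Your proof is correct and takes essentially the same route as the paper's: both condition on how many boxes of each row carry the entry $k+1$ (your $(p,q)$ is the paper's $(l,\,s+l)$), delete those boxes, and record that the weight drops by exactly $\chi(p\ge 1)+\chi(q>p)\in\{0,1,2\}$ before splitting into the same four cases. Your explicit remark that the two statistics count \emph{distinct values} (so a whole block of $(k+1)$'s contributes only $1$) is precisely the reading forced by the paper's example $f_{2,1}^{1}=1+3x+x^{2}$ and used implicitly in its proof.
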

\begin{proof}
Assume that in the first row we fill precisely the last $l$ boxes with $k+1$, while in the second row we fill precisely the last $s+l$ boxes with $k+1$. We proceed according to the following two cases.

\noindent \emph{Case 1:} $l =0$. Then $0\leq s\leq n$.

\noindent \emph{Subcase 1a:} $s=0$. Then one sees easily that the monomials associated to these Young tableaux sum up to $f_{n, n}^k$.

\noindent \emph{Subcase 1b:}  $1 \leq s \leq n$. Then we have that
\begin{eqnarray*}
\#\{a_i\mid a_i>0\}+\# \{b_j \mid b_j>a_j \}
&=&\#\{a_i\mid a_i>0 \ \text{for} \ i\in[1,n]\}\\
&+&\# \{b_j \mid b_j>a_j \ \text{for} \ j\in[1,n-s]\}+1.
\end{eqnarray*}
The monomials associated to these Young tableaux sum up to $x\sum_{s=1}^{n} f_{n, n-s}^k$.

\noindent \emph{Case 2:}  $1 \leq l \leq n$. Then $0\leq s\leq n-l$.

\noindent \emph{Subcase 2a:} If $s=0$, we obtain that
\begin{eqnarray*}
\#\{a_i\mid a_i>0\}+\# \{b_j \mid b_j>a_j \}
&=&\#\{a_i\mid a_i>0 \ \text{for} \ i\in[1,n-l]\}\\
&+&\# \{b_j \mid b_j>a_j \ \text{for} \ j\in[1,n-l]\}+1.
\end{eqnarray*}
The monomials associated to these Young tableaux sum up to $x\sum_{l=1}^{n} f_{n-l, n-l}^k$.

\noindent \emph{Subcase 2b:} $1\leq s\leq n-l$. Then we have that
\begin{eqnarray*}
\#\{a_i\mid a_i>0\}+\# \{b_j \mid b_j>a_j \}
&=&\#\{a_i\mid a_i>0 \ \text{for} \ i\in[1,n-l]\}\\
&+&\#\{b_j \mid b_j>a_j \ \text{for} \ j\in[1,n-l-s]\}+2.
\end{eqnarray*}
The monomials associated to these Young tableaux sum up to $x^2\sum_{l=1}^{n}\sum_{s=1}^{n-l} f_{n-l, n-l-s}^k$.

Adding up the four terms above gives the desired formula.
\end{proof}

Note that $f_{n, n}^{n+1}$ is the $\caN$-polynomial for $[2]\times [n] \times [n+1]$. Thus Proposition \ref{lemma-recursive-young}  may be useful for investigating the conjectures of this section.

\medskip

\centerline{\scshape Acknowledgements}
Sincere thanks go to Professor Athanasiadis for telling us the proof of Proposition \ref{prop-gamma-H-n}, to Professor Proctor for carefully explaining to us that every connected minuscule poset is Peck, and to Professor Stanley for sharing his immense knowledge with us.

\end{document}